\documentclass[12pt,a4paper,twoside]{article}
\usepackage{booktabs}
\usepackage{multirow}
\usepackage{geometry}
\usepackage[UKenglish]{babel}
\usepackage[T1]{fontenc}
\usepackage[utf8x]{inputenc}
\usepackage{latexsym,amsfonts,amsmath,amsthm,amssymb,mathrsfs}
\usepackage{fullpage}
\usepackage{xcolor,bm, bbm}
\usepackage{paralist}
\usepackage{todonotes}
\usepackage{dsfont}
\usepackage{float}
\usepackage{caption}
\usepackage{xfrac}
\usepackage[colorlinks, linkcolor=blue,citecolor=blue, anchorcolor=blue,urlcolor=blue,hypertexnames=false]{hyperref}

\usepackage{graphicx}
\usepackage{tikz,ifthen}
\usepackage{pgfplots}
\newtheorem{thm}{Theorem}%[section]
\newtheorem{lem}{Lemma}[section]

\newtheorem{cor}[thm]{Corollary}

\newtheorem{proposition}[lem]{Proposition}

\newtheorem*{rem*}{Remark}
\newtheorem*{thm*}{Theorem}

\title{ Fourier Dimension in Inhomogeneous Duffin--Schaeffer Conjecture}
\author{
  Bo Tan\\
  {\normalsize School of Mathematics and Statistics}\\
  {\normalsize Huazhong University of Science and Technology, 430074 Wuhan, PR China}\\
  {\normalsize Email: \texttt{tanbo@hust.edu.cn}}
  \and
  Qing-Long Zhou\footnote{Corresponding author.}\\
  {\normalsize School of Mathematics and Statistics}\\
  {\normalsize Wuhan University of Technology, 430070 Wuhan, PR China}\\
  {\normalsize Email: \texttt{zhouql@whut.edu.cn}}
}

\date{}

\begin{document}

%\subjclass{11K60 11J83 11J71 11H16}
%keywords: metric Diophantine approximation, Duffin-Schaeffer conjecture, Fourier dimension

\maketitle

\begin{abstract}
Let \(Q \subseteq \mathbb{N}\) be a subset, and let \(\psi\colon \mathbb{N} \to [0, \tfrac{1}{2})\), \(\theta\colon \mathbb{N} \to \mathbb{R}\) be functions. Let \(\{A_q\}\) and \(\{B_q\}\) be sequences of integers such that \(\gcd(A_q, B_q) = 1\) and \(B_q > 0\) for all \(q\). Define \(W_Q^{\ast}(\psi,\theta)\) to be the set of \(x \in [0,1]\) for which
\[
\left| x - \frac{p + \theta(q)}{q} \right| < \frac{\psi(q)}{q}
\]
holds for infinitely many \((p,q) \in \mathbb{Z} \times Q\) with \(\gcd(B_q p + A_q, q) = 1\).

In this paper, we determine the Fourier dimension of \(W_Q^{\ast}(\psi,\theta)\). Our result not only recovers the classical theorems of Kaufman and Bluhm (concerning the homogeneous case \(\psi(q) = q^{-\tau}\) with \(\tau \ge 1\)) and the one-dimensional version of a result by Cai and Hambrook on the inhomogeneous approximable set, but also provides a complete inhomogeneous generalization. Moreover, it gives an affirmative answer to the coprime formulation of the Chen--Xiong conjecture.
\end{abstract}

\section{Motivation$\colon$ Metrical Diophantine approximation}
 A central theme in Diophantine approximation is to understand how well real numbers can be approximated by rationals. This topic lies at the heart of metric number theory, where one often studies the size---measured in terms of the Lebesgue measure, Hausdorff measure/dimension, Fourier dimension, etc.---of Diophantine sets satisfying prescribed approximation conditions. A fundamental problem in this context is, given an approximating function \(\psi\colon \mathbb{N} \to [0, 1/2)\), to determine the size of the \(\psi\)-well approximable set for systems of linear forms:
\[
W(n,m;\psi,\bm{\theta}) := \left\{ \mathbf{x} \in [0,1]^{nm} \;:\; |\mathbf{q} \mathbf{x} - \mathbf{p} - \bm{\theta}| < \psi(|\mathbf{q}|) \text{ for i.m. } (\mathbf{p}, \mathbf{q}) \in \mathbb{Z}^m \times \mathbb{N}^n \right\},
\]
where ``infinitely many'' is abbreviated as ``i.m.''. Here, \(\mathbf{q} \in \mathbb{N}^n\), \(\mathbf{p} \in \mathbb{Z}^m\), and \(\bm{\theta} \in \mathbb{R}^m\) are regarded as row vectors, while \(\mathbf{x} \in [0,1]^{nm}\) is interpreted as an \(n \times m\) matrix. Throughout this paper, \(|\cdot|\) denotes the maximum norm$\colon$ \(|\mathbf{q}| = \max_{1 \le j \le n} |q_j|\). To set the stage, we briefly review the relevant metric results concerning \(W(n,m;\psi,\bm{\theta})\), which will help motivate the present work.

 \medskip

   \noindent{\bf Lebesgue Measure.}
{\em{%Inhomogeneous
Khintchine--Groshev Theorem}} \cite{G38,S64} established  that for any function $\psi\colon \mathbb{N}\to [0,\frac{1}{2})$ that is monotonically decreasing, the Lebesgue measure of the set $W(n,m;\psi,\bm{\theta})$ satisfies
\begin{equation*}
\mathcal{L}\big(W(n,m;\psi,\bm{\theta})\big)=\begin{cases}
   0   & \text{if $\sum_{q=1}^{\infty}q^{n-1}\psi(q)^{m}<\infty$}, \\
   ~&\\
    1  & \text{if $\sum_{q=1}^{\infty}q^{n-1}\psi(q)^{m}=\infty$}.
\end{cases}
\end{equation*}
The non-monotonic case is more involved.
In the homogeneous setting $(\bm{\theta}=\bm{0})$, Beresnevich and Velani \cite{BV10} removed the monotonicity condition when $nm\ge 2.$ However, monotonicity is necessary in the case $(m,n) = (1,1),$
as demonstrated by the counterexample of Duffin and Schaeffer \cite{DS41}.
Further,  consider the set
$$ {W}^{\ast}(1,1;\psi,0):=\Big\{x\in [0,1]\colon  |qx-p|<\psi(q)\text{ for i.m.} (p,q)\in \mathbb{Z}\times \mathbb{N} \text{ with } \gcd(p,q)=1\Big\}.$$
{\em{Duffin--Schaeffer conjecture}} asserted that
\begin{equation*}
\mathcal{L}\big(W^{\ast}(1,1;\psi,0)\big)=\begin{cases}
   0   & \text{if $\sum_{q=1}^{\infty}\frac{\phi(q)}{q}\psi(q)<\infty$}, \\
   ~&\\
    1  & \text{if $\sum_{q=1}^{\infty}\frac{\phi(q)}{q}\psi(q)=\infty$},
\end{cases}
\end{equation*}
where $\phi$ is Euler's totient function.
 This conjecture animated a great deal of research until it was finally proved in a breakthrough by Koukoulopoulos and Maynard \cite{KM20}.

In the inhomogeneous case \((\bm{\theta} \neq \bm{0})\), Allen and Ram\'{i}rez \cite{AR23} removed the monotonicity condition for cases with \(nm \ge 3\). For \((n,m) = (2,1)\), Hauke \cite{H25} showed that the monotonicity condition can be removed for any non-Liouville irrational inhomogeneous parameter \(\theta\). Kim \cite{K25} further proved that this result extends to all \(\theta \in \mathbb{R}\) (without the non-Liouville restriction) in the case \((n,m) = (2,1)\), and also holds for \((n,m) = (1,2)\) with any rational parameter \(\bm{\theta} \in \mathbb{Q}^2\). The other cases for \((n,m) = (1,2)\) are still wide open. Similarly to the homogeneous
case for $(n,m)=(1,1)$, counterexamples to the removal of monotonicity were given in \cite{CHPR25,R17}. Despite rumours of its falsity, the most direct analogue of the Duffin--Schaeffer conjecture remains formally open (see \cite{R17}):  set
$$ {W}^{\ast}(1,1;\psi,\theta):=\Big\{x\in [0,1]\colon  |qx-p-\theta|<\psi(q)\text{ for i.m. } (p,q)\in \mathbb{Z}\times \mathbb{N} \text{ with } \gcd(p,q)=1\Big\}.$$
{\em{Inhomogeneous Duffin--Schaeffer conjecture}} states that
\begin{equation*}
\mathcal{L}\big(W^{\ast}(1,1;\psi,\theta)\big)=\begin{cases}
   0   & \text{if $\sum_{q=1}^{\infty}\frac{\phi(q)}{q}\psi(q)<\infty$}, \\
   ~&\\
    1  & \text{if $\sum_{q=1}^{\infty}\frac{\phi(q)}{q}\psi(q)=\infty$}.
\end{cases}
\end{equation*}
The so-called weak Duffin--Schaeffer conjecture (where the coprimality condition is dropped, so that the set \(W\) is used in place of \(W^{\ast}\)) also remains open; see \cite[Conjecture 1.22]{CT24}. This weak version was resolved for a wide class of non-monotonic functions in \cite{C18,CT19,CT24}. Results under extra divergence conditions were established by Yu \cite{Y19,Y21}. For historical details, we refer to Tables 1 and 2 in \cite{HR24}.

 \medskip

   \noindent{\bf Hausdorff Dimension.}
Jarn\'{i}k's theorem \cite{J31} establishes that, under the monotonicity assumption on $\psi$, the Hausdorff dimension of $W(1,1;\psi,0)$ is
\[
\dim_{\mathrm{H}} W(1,1;\psi,0) = \min\{t(\psi),1\},
\]
where
\[
t(\psi) = \inf\left\{s \ge 0 : \sum_{q \in \mathbb{N}} q \left( \frac{\psi(q)}{q} \right)^s < \infty \right\}.
\]
%Here, $\dim_{\mathrm{H}} W(1,1;\psi,0)$ is the Hausdorff dimension of $W(1,1;\psi,0).$
(In this paper, we do not need the definition of Hausdorff dimension; the interested reader may find further details in \cite{F14}.) An alternative proof of this result can be derived from Khintchine's theorem by applying the mass transference principle due to Beresnevich and Velani \cite{BV06}. For a general (not necessarily monotonic) function $\psi$, the Hausdorff dimension of the set $W(1,1;\psi,0)$ was thoroughly investigated by Hinokuma and Shiga \cite{HS96}. More recently, Yu \cite{Y19} showed that
\[
\dim_{\mathrm{H}} W^{\ast}(1,1;\psi,\theta) = \dim_{\mathrm{H}} W(1,1;\psi,0).
\]
A straightforward covering argument yields $\dim_{\mathrm{H}} W(1,1;\psi,\theta) \le \min\{t(\psi),1\}$. Consequently, we obtain the following identities:
\[
\dim_{\mathrm{H}} W(1,1;\psi,\theta) = \dim_{\mathrm{H}} W^{\ast}(1,1;\psi,\theta) = \dim_{\mathrm{H}} W(1,1;\psi,0).
\]
For details and a comprehensive overview of the Hausdorff dimension of $ W(n,m;\psi,{\bm \theta})$ in higher dimensions, see  \cite{WW21,BV26} and the references therein.

 \medskip

   \noindent{\bf Fourier Dimension.}  %We now turn our attention to the Fourier dimension of sets $W(n,m;\psi,{\bm \theta})$.
Before proceeding, we recall some definitions.
%Throughout, for a real vector $\mathbf{x} = (x_1, \ldots, x_d)$, the notation %$|\mathbf{x}|_\infty$ refers to the maximum norm, defined as
%\[
%|(x_1, \ldots, x_d)|_\infty = \max_{1 \leq i \leq d} |x_i|.
%\]
As usual, the Fourier transform of a non-atomic probability measure $\nu$ is defined by
\[
\hat{\nu}({\bm t}) := \int e^{-2\pi i \langle {\bm t}, \mathbf{x} \rangle} \, {\mathrm{d}}\nu(\mathbf{x}) \qquad ({\bm t} \in \mathbb{R}^d),
\]
where \(
\langle {\bm t}, \mathbf{x} \rangle = t_1 x_1 + \cdots + t_d x_d.
\)
Loosely speaking, Fourier decay reflects the ``smoothness" or ``randomness"  of a measure: it indicates that the measure is spread out in such a way that coherent long-range correlations are dampened. In harmonic analysis and geometric measure theory, this decay is intimately connected to the fractal and dimensional properties of the measure’s support.  Indeed, a classical result of Frostman states that for $E\subset \mathbb{R}^d$,
\begin{equation}\label{Frostman}
\dim_{\rm H} E \geq \dim_{\rm F} E,
\end{equation}
where $\dim_{\rm F}E$ is the Fourier dimension of $E$, defined as
\[
\dim_{\rm F} E = \sup \left\{ s \in [0, d] : \exists \nu \in \mathcal{P}(E) \text{ such that } |\hat{\nu}({\bm \xi})| \ll_s (1 + |{\bm \xi}|)^{-s/2}, {\bm \xi}\in\mathbb{R}^{d} \right\}.\footnote{Throughout we use Vinogradov notation$\colon$ $A\ll B$ means $|A|\le C |B|$ for some constant $C>0$ independent of $A$ and $B$; $A\asymp B$ means $A\ll B$ and $B\ll A.$}
\]
Here \(\mathcal{P}(E)\) denotes the set of Borel probability measures on \(\mathbb{R}^d\) that give full measure to \(E\). When we have equality in (\ref{Frostman}), the set $E$ is called a Salem set.

A rich variety of random Salem sets is known to exist; we refer the interested reader to \cite{M15} for further details. Non-trivial deterministic fractal Salem or non-Salem sets are, however, hard to construct. This serves as one of the main motivations for the present paper. We survey the known results on Salem and non-Salem sets arising in Diophantine approximation, providing both a historical overview and a discussion of recent progress.

\begin{table}[H]
\centering
\renewcommand{\arraystretch}{1.3}
\caption{Explicit constructions of Salem sets in Euclidean and $p$-adic spaces}
\label{tab:salem_construction}
\begin{tabular*}{\textwidth}{@{\extracolsep{\fill}} l l l @{}}
\toprule
Space & Setting type & Reference \\
\midrule
\multirow{3}{*}{$\mathbb{R}$}
 & $\tau$-well approximable  & Kaufman \cite{K81}, Bluhm \cite{B98} \\
\addlinespace[3pt]
 & restricted $\tau$-well approximable   & Hambrook \cite{H19} \\
 \addlinespace[3pt]
 & $\tau$-Dirichlet non-improvable   & Tan--Zhou \cite{TZ26} \\
 \midrule
\addlinespace[3pt]
$\mathbb{R}^d$  & \shortstack[c]{``algebraic'' variants of \\ $\tau$-well approximable}   & \shortstack[c]{Hambrook \cite{H17} ($d=2$) \\ Fraser--Hambrook \cite{FH23}} ($d\ge2$)\\
\midrule
\addlinespace[3pt]
$\mathbb{Q}_p$  & $\tau$-well approximable  & Fraser--Hambrook \cite{FH20}\\
\bottomrule
\end{tabular*}
\end{table}

\begin{table}[H]
\centering
\renewcommand{\arraystretch}{1.3}
\caption{Explicit constructions of non-Salem sets in Euclidean space}
\label{tab:salem_progress}
\begin{tabular*}{\textwidth}{@{\extracolsep{\fill}} l l l @{}}
\toprule
Space & Setting type & Reference \\
\midrule
$\mathbb{R}$ &  $\psi$-well approximable & Chen--Xiong \cite{CX26} \\
\midrule
$\mathbb{R}$ &  uniformly $\tau$-well approximable & Li--Liu \cite{LL26} \\
\midrule
$\mathbb{R}^d$  &  multiplicative $\tau$-well approximable & Tan--Zhou \cite{TZ25} ($d=2$), He \cite{H26} ($d\ge 2$)\\
\midrule
\multirow{1}{*}{$\mathbb{R}^d$}
& \shortstack[c]{$\tau$-well approximable vectors \\ badly approximable vectors} & Hambrook--Yu \cite{HY23} \\
\midrule
\addlinespace[3pt]
$\mathbb{R}^d$  & well-approximable matrices &  Cai--Hambrook \cite{CH24} \\
\bottomrule
\end{tabular*}
\end{table}

\noindent\textbf{Remark 1.} Let us make the following remarks regarding Tables 1 and 2:
\begin{itemize}
\item As summarized in Table 1, the \(\tau\)-well approximable set
\[
\left\{ x \in [0,1] : |qx - p| \le q^{-\tau} \text{ for i.m. } (p,q) \in \mathbb{Z} \times \mathbb{N} \right\}
\]
is Salem. However, the proofs by Kaufman and Bluhm of the lower bounds for the Fourier dimension of such sets essentially rely on considering the coprime setting, i.e., restricting to rational approximations \(p/q\) with \(\gcd(p,q) = 1\):
\[
\left\{ x \in [0,1] : |qx - p| \le q^{-\tau} \text{ for i.m. } (p,q) \in \mathbb{Z} \times \mathbb{N} \text{ with } \gcd(p,q) = 1 \right\}.
\]
To the best of our knowledge, the Fourier dimension in the coprime setting remains unknown for general approximation functions. Our goal is to establish a complete characterization in the general setting.

\item  Let $Q \subseteq \mathbb{N}$ be  a subset, and define the associated approximable set
\[
W_Q(1,1;\psi,\theta) = \left\{ x \in [0,1] : |qx - p - \theta| < \psi(q) \text{ for i.m. } (p,q) \in \mathbb{Z} \times Q \right\}.
\]
Specially, $W_{\mathbb{N}}(1,1;\psi,\theta)=W(1,1;\psi,\theta).$ Cai and Hambrook \cite[Theorem 1.4.1]{CH24}{\footnote{Cai and Hambrook studied the mutli-dimensional generalization of $W_Q(1,1;\psi,\theta)$}} proved that under the convergence condition $\sum_{q \in Q} \psi(q) < \infty$,
\[
\dim_{\mathrm{F}} W_Q(1,1;\psi,\theta) = \min\{ 2 s(Q,\psi), 1 \},
\]
where
\[
s(Q,\psi) = \inf \left\{ s \ge 0 : \sum_{q \in Q} \left( \frac{\psi(q)}{q} \right)^s < \infty \right\}.
\]
They further proposed the following conjecture: prove or disprove that
\[
\text{if } \sum_{q\in Q}\psi(q)=\infty,\text{ then } \dim_{{\rm F}}W_Q(1,1;\psi,\theta)=1.
\]
The authors \cite{TZ25} constructed a function \( \psi \) with
\(
\sum_{q \in \mathbb{N}} \psi(q) = \infty
\)
but
\[
\dim_{\rm F} W(1, 1; \psi, 0) = 0,
\]
providing a negative answer to this conjecture. Chen and Xiong recently established the complete result for the set $W(1,1;\psi,0)$: if
\begin{equation}\label{con2}
\sum_{q\in \mathbb{N}}\phi(q)\sup_{q\mid n}\frac{\psi(n)}{n}<\infty,
\end{equation}
then
\[
\dim_{\rm F}W(1,1;\psi,0)=\min\{2s(\psi),1\},
\]
where
\[
s(\psi)=\inf\left\{s\ge 0\colon \sum_{q\in \mathbb{N}}\left(\frac{\psi(q)}{q}\right)^{s}<\infty\right\}.
\]
In the inhomogeneous case, they also proved that the dimension formula remains valid when $\theta$ is rational, i.e., $\theta = A/B$ with $\gcd(A,B)=1$, provided that condition \eqref{con2} holds.
Furthermore, they proposed the following conjecture:
\[\text{the dimension formula remains valid for } \theta\in \mathbb{R}\setminus\mathbb{Q}. \]
While the conjecture remains unresolved, it motivates the investigation of related problems where additional arithmetic constraints are imposed, for instance, the coprime setting.

\item There also exist several natural and important open problems. For example$\colon$

(1) The badly approximable set
\[
\textbf{Bad} := \left\{ x \in [0,1] : \inf_{q \in \mathbb{N}} q \|qx\| > 0 \right\}
\]
has null Lebesgue measure but full Hausdorff dimension, where  \(\|\cdot\|\) denotes the distance from a real number to the nearest integer. Currently, it is only known that \(\textbf{Bad}\) is of positive Fourier dimension \cite{K80,QR03}; whether this set is a Salem set remains an open problem.

(2) The Fourier dimension of \(W(n,m;\psi,\bm{\theta})\) established by Cai and Hambrook only provides isotropic estimates of Fourier decay for square targets (i.e., when the approximating functions are identical in all directions). What can be said about the Fourier dimension when the target is a rectangle (especially a very elongated one), whose Fourier transform exhibits significant directional decay? Precisely, consider  the weighted \({\bf \Psi}\)-well approximable set
\[\left\{{\bf x}\in [0,1]^{nm}\colon |{\bf qx}-{\bf p}-\bm{\theta}|<\psi_j(|{\bf q}|)~ (1\le j\le m) \text{ for i.m. } ({\bf p},{\bf q})\in \mathbb{Z}^{m}\times \mathbb{N}^{n}\right\},\]
where  $\psi_j\colon\mathbb{N}\to [0,\frac{1}{2})$ for $1\le j\le m.$ He \cite{H26} proved that if
\[\sum_{\mathbf{q} \in \mathbb{N}^n} \prod_{j=1}^m \psi_j(|\mathbf{q}|) < \infty\]
   and  \(s(\{\psi_j\}) < 1\), where
\[
  s(\{\psi_j\}) := \inf \left\{ s \geq 0 : \sum_{\mathbf{q} \in \mathbb{N}^n } \left( \min_{1 \leq j \leq m} \frac{\psi_j(|\mathbf{q}|)}{|\mathbf{q}|} \right)^s < \infty \right\},
\]
then the set has Fourier dimension \(2s(\{\psi_j\})\). However, whether this result  holds in the general case remains unknown.

\item[] {\textbf{Applications.}} There exists a profound connection between positive Fourier dimension and the existence of normal numbers in fractal sets, as discussed in the seminal work of Pollington \emph{et al.} \cite{PVZZ22}. Furthermore, constructing measures with prescribed Fourier decay on Diophantine sets has far-reaching implications. It can facilitate the development of the theory of multiplicative and simultaneous Diophantine approximation on fractals \cite{PV00,CY24,TZ26+}, as well as twisted Diophantine approximation \cite{HR25,CZ25}. Consequently, Fourier dimension can be regarded as a critical bridge linking harmonic analysis and metric number theory.

\end{itemize}

Based on the above discussion and motivated by open problems in inhomogeneous Diophantine approximation with coprimality conditions, we   study a concrete arithmetic framework that naturally incorporates a condition of Duffin--Schaeffer type.

Let $\{A_q\}$ and $\{B_q\}$ be sequences of integers such that $\gcd(A_q, B_q)=1$ and $B_q > 0$ for all $q$. Let $Q \subseteq \mathbb{N}$ be a  subset, let $\psi\colon Q \to [0, \tfrac{1}{2})$ be a positive function, and let $\theta\colon Q \to \mathbb{R}$ be an arbitrary function.
Define $W_{Q}^{\ast}(\psi,\theta)$ to be the set of $x\in[0,1]$ such that
\[
\left| x - \frac{p+\theta(q)}{q} \right|<\frac{\psi(q)}{q} \text{ for i.m. }(p,q)\in \mathbb{Z}\times Q \text{ with } \gcd(B_qp+A_q,q)=1.
\]
Strictly speaking, the set \({W}_{Q}^{\ast}(\psi,\theta)\) depends on the sequences \(\{A_q,B_q\}\). Since these sequences are fixed throughout our study, we omit this dependence from the notation. Thanks to the limsup set structure of \(W_{Q}^*(\psi,\theta)\), it follows from \cite[Proposition 1]{BHV24} and the first Borel--Cantelli lemma that the Lebesgue measure of $W^*(\psi,\theta)$ vanishes provided that
\begin{equation}\label{con}
\sum_{q\in Q} \psi(q) \prod_{{p\mid q, ~p\nmid B_q}} \bigl(1 - p^{-1}\bigr) < \infty,
\end{equation}
where the product is over all prime numbers $p$ that divide $q$ and are coprime to $B_q$.
Our main result provides the Fourier dimension of this set.

\begin{thm}\label{thm1}
Let $Q\subset \mathbb{N}$, and let $\psi\colon Q\to [0,\frac{1}{2})$ be a function satisfying the condition (\ref{con}).
Then the Fourier dimension of \(W_{Q}^*(\psi,\theta)\) is
$$\dim_{{\rm F}}W_Q^{\ast}(\psi,\theta)=\min\{2s(Q,\psi),1\}.$$
\end{thm}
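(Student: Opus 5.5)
The proof splits into an upper bound and a lower bound.

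\emph{Upper bound.} We bound $\dim_{\rm H}$ and use Frostman's inequality (\ref{Frostman}). First, condition (\ref{con}) implies that $W_Q^{\ast}(\psi,\theta)$ is Lebesgue null, as noted before the statement, so the trivial bound $1$ applies. Second, if $s>s(Q,\psi)$, we cover the set at level $q$ by all intervals of length $2\psi(q)/q$ around $(p+\theta(q))/q$. This family is not efficient enough, since it gives $\sum q(\psi(q)/q)^s$, which matches the Jarník exponent $t$ rather than $s(Q,\psi)$. So the upper bound must come from Fourier analysis directly, as in Cai--Hambrook \cite{CH24}. Take $\nu\in\mathcal P(W_Q^{\ast})$ with $|\hat\nu(\xi)|\ll|\xi|^{-\sigma/2}$ and $\sigma>2s(Q,\psi)$. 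Test $\nu$ against the smooth periodic bump $\Phi_q(x)=\sum_p\chi\bigl((qx-p-\theta(q))/\psi(q)\bigr)$. Its Fourier coefficients live on frequencies $kq$ with size $\psi(q)\hat\chi(k\psi(q))$. This gives
\[
\nu(E_q)\ll \psi(q)+\sum_{k\neq0}\psi(q)|\hat\chi(k\psi(q))|\,|kq|^{-\sigma/2}\ll \psi(q)+(\psi(q)/q)^{\sigma/2}.
\]
Summing over $q\in Q$ converges when $\sum\psi(q)<\infty$, because $\sigma/2>s(Q,\psi)$. Borel--Cantelli then gives $\nu(W)=0$, a contradiction. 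Our hypothesis is only the weaker (\ref{con}), so we must incorporate the coprimality: $E_q^{\ast}$ keeps only residues $p$ with $\gcd(B_qp+A_q,q)=1$. We replace $\Phi_q$ by its restriction weighted via Möbius inversion over divisors $d\mid q$ with $\gcd(d,B_q)=1$. Here $B_qp+A_q\equiv0\pmod d$ is a single residue class of $p$ mod $d$, so it is a lattice of spacing $d/q$. The main term becomes $\psi(q)\prod(1-1/p)$, which is summable by (\ref{con}). The error terms are sums over $d$ of $(\psi(q)/q)^{\sigma/2}$-type quantities at frequencies $kq/d$; these are controlled using that $\sigma/2>s$, with a small loss $\tau(q)\ll q^{\epsilon}$ absorbed by adjusting $\sigma$.

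\emph{Lower bound.} Fix $s<s(Q,\psi)$ with $2s<1$. Then $\sum_{q\in Q}(\psi(q)/q)^s=\infty$, and we choose a sparse sequence of blocks $Q_j\subset Q\cap[M_j,2M_j]$ realizing this divergence. Following Kaufman/Bluhm and Cai--Hambrook, we build $\nu=\lim \mu_1\cdots\mu_n$ with smooth factors
\[
F_j(x)=\frac{1}{|Q_j|}\sum_{q\in Q_j}\frac{1}{\phi^{\ast}(q)}\sum_{p\,:\,\gcd(B_qp+A_q,q)=1}\frac{q}{\psi(q)}\chi\!\left(\frac{qx-p-\theta(q)}{\psi(q)}\right),
\]
where $\phi^{\ast}$ normalizes the count of admissible residues. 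Weighting the $q$'s by $(\psi(q)/q)^s$ gives the needed balance. The Fourier coefficient of $F_j$ at $k$ is
\[
\frac{1}{|Q_j|}\sum_{q\mid k}e(-k\theta(q)/q)\,\hat\chi(k\psi(q)/q)\,\frac{c_q^{\ast}(k/q)}{\phi^{\ast}(q)},
\]
where $c_q^{\ast}$ is a twisted Ramanujan sum. The twist $e(\cdot)$ has modulus one, so $\theta$ is harmless after we take absolute values; this is the key point that makes arbitrary real $\theta$ tractable in the coprime setting. The twisted Ramanujan sum factors multiplicatively. Primes dividing $B_q$ contribute nothing, and for the others $|c^{\ast}|\le\gcd(k/q\text{-part},\ldots)$-type bounds hold, giving $|\hat F_j(k)|\ll |Q_j|^{-1}\sum_{q\mid k}\gcd$-factor$/\phi^{\ast}(q)$. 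We then use the divisor bound to get $|\hat F_j(k)|\ll |k|^{-s+\epsilon}$ for $k\neq0$ in the relevant range, and decay from $\hat\chi$ beyond $q/\psi(q)$.

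\emph{Main obstacle.} The hardest step is the Fourier decay of the product measure, $|\hat\nu(\xi)|\ll|\xi|^{-s+\epsilon}$, obtained by the standard convolution argument: sparsity of $M_j$ lets each new factor act at its own scale. Within that step, the delicate part is bounding the twisted Ramanujan sums uniformly in $A_q,B_q$ while keeping the dependence on $\phi^{\ast}(q)$ sharp enough. I would first try the multiplicative factorization prime-by-prime, using $\gcd(A_q,B_q)=1$ to invert $B_q$ modulo primes not dividing it. Finally, $\nu$ is supported on $\bigcap_j\bigcup_{q\in Q_j}E_q^{\ast}\subset W_Q^{\ast}$, so letting $s\uparrow s(Q,\psi)$ gives $\dim_{\rm F}\ge\min\{2s(Q,\psi),1\}$.
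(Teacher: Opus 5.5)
Your upper bound is the paper's argument in slightly cleaner form: M\"obius inversion over $d\mid q$ with $\gcd(d,B_q)=1$ gives lattices of spacing $d/q$, each contributing $\ll(\psi(q)/q)^{\sigma/2}$, with total loss $\tau(q)$. The gap is in the lower bound, in the single-scale estimate for $\widehat{F_j}(k)$, and it has two parts.

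First, restricting to $q\mid k$ is correct for the complete sum over $p \bmod q$, but not for the coprime one. The coefficient is
\[
\widehat{F_j}(k)=\frac{1}{|Q_j|}\sum_{q\in Q_j}\frac{e(-k\theta(q)/q)}{\phi^{\ast}(q)}\,\widehat{\chi}\Bigl(\frac{k\psi(q)}{q}\Bigr)\sum_{p\in\mathcal{I}_q}e\Bigl(-\frac{kp}{q}\Bigr),
\]
and every $q\in Q_j$ contributes. Already for $A_q=0$, $B_q=1$, $k=1$ the inner sum is the Ramanujan sum $c_q(1)=\mu(q)$. So counting divisors of $k$ bounds nothing. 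What is needed is the paper's two-step estimate. The first step is the uniform bound $\bigl|\sum_{p\in\mathcal{I}_q}e(-kp/q)\bigr|\le\sum_{l\mid q,\,(q/l)\mid k}q/l\ll\gcd(q,k)\log q$ (Lemma~\ref{grs}). The second is averaging in $q$: $\sum_{q\le X}\gcd(q,k)/q\le\tau(k)(1+\log X)$. Together with $\phi^{\ast}(q)\gg q/\log q$, this gives $|\widehat{F_j}(k)|\ll\tau(k)(\log X)^{3}/|Q_j|$, where $X=\max Q_j$. This is the size your heuristic predicts, but it comes from averaging $\gcd(q,k)$ over all $q$, not from $q\mid k$.

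Second, even the corrected bound gives $|k|^{-s+\varepsilon}$ decay only if $|Q_j|^{-1}$ is, up to logarithms, at most $(\psi(q)/q)^{s}$ for every $q\in Q_j$. In other words, the bump widths in the block must be comparable. A block $Q_j\subset Q\cap[M_j,2M_j]$ with the uniform weights $1/|Q_j|$ of your displayed $F_j$ does not guarantee this. Suppose some $q_0\in Q_j$ has very small $\psi(q_0)$, and take $k=q_0m$ with $m\approx c/\psi(q_0)$ and $c$ small. The inner sum for $q_0$ is then exactly $\phi^{\ast}(q_0)$, so the $q_0$-term alone has modulus $|Q_j|^{-1}|\widehat{\chi}(m\psi(q_0))|\asymp|Q_j|^{-1}\ge(M_j+1)^{-1}$. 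Meanwhile $|k|^{-s}\approx(\psi(q_0)/q_0)^{s}$ can be far smaller, and nothing in your argument controls this term, so the stability step fails near $\xi=k$.

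You must do one of two things. One option is to pigeonhole on the width, as the paper does with $Q(M)=\{q:(2M)^{-1}\le(\psi(q)/q)^{\eta}<M^{-1}\}$. Then all widths are $\asymp M^{-1/\eta}$, $\log q\ll\log M$, and $\#Q(M)\gg M/(\log M)^2$ for infinitely many dyadic $M$. The other option is to genuinely weight by $(\psi(q)/q)^{s}$, as your remark suggests. Then $(\psi(q)/q)^{s}|\widehat{\chi}(k\psi(q)/q)|\le|k|^{-s}$ termwise, and the same gcd-averaging finishes the estimate, provided $\sum_{q\in Q_j}(\psi(q)/q)^{s}$ is bounded below by a power of $1/\log M_j$. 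With these two repairs your construction becomes the paper's Lemmas~\ref{gM_Fourier} and~\ref{stability}.
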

When \(Q = \mathbb{N}\), we abbreviate \(W_{\mathbb{N}}^{\ast}(\psi,\theta)\) as \(W^{\ast}(\psi,\theta)\). A direct corollary of Theorem \ref{thm1} obtained by taking \(Q = \mathbb{N}\) reads as follows.

\begin{cor}\label{cor2}
Let \(\psi \colon \mathbb{N} \to [0, 1/2)\) be a function satisfying condition \eqref{con} with \(Q\) replaced by \(\mathbb{N}\). Then
\[
\dim_{\mathrm{F}} W^{\ast}(\psi,\theta) = \min\{2s(\psi), 1\}.
\]
\end{cor}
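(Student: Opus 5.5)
The proof splits into an upper bound and a lower bound.

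\textbf{Upper bound.} I would follow the standard covering argument. Frostman's inequality \eqref{Frostman} gives $\dim_{\rm F}\le\dim_{\rm H}$, but this is not sharp here, since $\dim_{\rm H}$ can be as large as $2s(Q,\psi)$ only in special situations. Instead I would use the argument of Cai--Hambrook \cite{CH24} and Chen--Xiong \cite{CX26}. Suppose $\nu\in\mathcal P(W_Q^*(\psi,\theta))$ with $|\hat\nu(\xi)|\ll|\xi|^{-s/2}$, and $s>2s(Q,\psi)$. Take a Schwartz bump $\varphi_q$ adapted to the $\psi(q)/q$-neighbourhood of the shifted lattice $(\mathbb Z+\theta(q))/q$. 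Its Fourier series is supported on multiples of $q$, with coefficients of size $\psi(q)\hat\varphi(k\psi(q))e^{2\pi i k\theta(q)}$. Then
\[
\nu(E_q)\le\int\varphi_q\,d\nu\ll \psi(q)+\psi(q)\sum_{k\ne0}|\hat\varphi(k\psi(q))|\,|\hat\nu(kq)|\ll\psi(q)+(\psi(q)/q)^{s/2}.
\]
Here $E_q$ denotes the union of the approximating intervals at level $q$; dropping the coprimality condition only enlarges the set, so this is an upper bound for the coprime set as well. Summing over $q\in Q$, the second term converges for $s/2>s(Q,\psi)$. The first term requires $\sum_{q\in Q}\psi(q)<\infty$, which is stronger than \eqref{con}.

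To get by with only \eqref{con}, I would replace the full lattice by the coprime sublattice. I would detect the condition $\gcd(B_qp+A_q,q)=1$ by Möbius inversion over $d\mid q$ with $\gcd(d,B_q)=1$. The constraint $d\mid B_qp+A_q$ then forces $p$ into a single residue class mod $d$, so $p=dm+r_d$. The corresponding sublattice has spacing $d/q$, and its Fourier coefficients are supported on multiples of $q/d$. The main term becomes $\psi(q)\sum_{d}\mu(d)/d=\psi(q)\prod_{p\mid q,\,p\nmid B_q}(1-p^{-1})$. The error terms are
\[
\sum_{d\mid q}\ \sum_{k\ne0}\frac{\psi(q)}{d}\,|\hat\nu(kq/d)|\,|\hat\varphi(k\psi(q)/d)|\ll \sum_{d\mid q}(\psi(q)/q)^{s/2}d^{\,s/2-1}\cdot(\text{log factors}).
\]
Controlling these error terms is the \emph{main obstacle}. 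With $s\le1$, the factor $d^{s/2-1}$ is at most $1$, so each error is at most $\tau(q)(\psi(q)/q)^{s/2}$, and the divisor function is absorbed by a slightly larger exponent $s'$. Borel--Cantelli then gives $\nu(W_Q^*)=0$, a contradiction. Hence $\dim_{\rm F}\le\min\{2s(Q,\psi),1\}$.

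\textbf{Lower bound.} I would adapt Kaufman's and Bluhm's construction, as in Cai--Hambrook's lower bound, which works for an arbitrary $Q$. Since $W_Q^*$ only shrinks if $\psi$ is decreased, fix $s<s(Q,\psi)$. Choose a sparse increasing sequence of blocks $Q_j\subset Q\cap[M_j,2M_j]$ with $\sum_{q\in Q_j}(\psi(q)/q)^{s}\gtrsim1$, refining each block to a subset where $\psi(q)/q$ is roughly constant. On each block define
\[
F_j(x)=\frac{\sum_{q\in Q_j}\sum_{p\,:\,\gcd(B_qp+A_q,q)=1}\varphi_{q,\psi(q)}(x-(p+\theta(q))/q)}{\text{normaliser}},
\]
and take $\nu$ to be the weak limit of $\chi\prod_{j\le J}F_j$. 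Each $\hat F_j$ is supported near frequencies $kq/d$; by the same Möbius expansion, the coprime restriction costs only multiplicative factors $\phi$-type averages and divisor sums. The estimate $|\hat F_j(\xi)-\delta_0|\ll(\text{small})\,|\xi|^{-s}$ (Bluhm's key estimate) then relies on counting $q\in Q_j$ dividing a given integer, which is at most $\tau(\xi)\ll|\xi|^{\epsilon}$. Kaufman's product lemma, with rapidly growing $M_j$, gives $|\hat\nu(\xi)|\ll|\xi|^{-s+\epsilon}$. The inhomogeneous shifts $\theta(q)$ only introduce unimodular phases, which disappear under absolute values. Since $\nu$ is supported on $W_Q^*(\psi,\theta)$, we obtain $\dim_{\rm F}\ge\min\{2s,1\}$. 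Letting $s\uparrow s(Q,\psi)$ completes the proof.
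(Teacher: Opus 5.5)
Your proposal is correct and takes essentially the same route as the paper. For the upper bound, you detect the coprimality by M\"obius inversion over $d\mid q$ with $(d,B_q)=1$, which the paper packages as the generalized Ramanujan-sum bound of Lemma~\ref{grs}, and then run a Fourier-decay Borel--Cantelli argument. For the lower bound, you use the Kaufman--Bluhm/Cai--Hambrook product of normalized bump sums over coprime $p$, averaged over blocks where $\psi(q)/q$ is roughly constant.

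One small slip: each sublattice error term is in fact of size $(\psi(q)/q)^{s/2}$, with no $d^{s/2-1}$ gain, because the effective range $|k|\lesssim d/\psi(q)$ contributes $(d/\psi(q))^{1-s/2}$ and cancels all $d$-dependence. Since you only use that this factor is at most $1$, your $\tau(q)(\psi(q)/q)^{s/2}$ bound and the conclusion are unaffected.
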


\noindent\textbf{Remark 2.} Although Corollary~\ref{cor2} is a special case of Theorem~\ref{thm1}, they are essentially equivalent. Indeed, assuming Corollary~\ref{cor2} holds, we prove Theorem~\ref{thm1} as follows.

We extend the function $\psi: Q \to [0, 1/2)$ to $\overline{\psi}: \mathbb{N} \to [0, 1/2)$ by setting
\[
\overline{\psi}(q) =
\begin{cases}
\psi(q) & \text{if } q \in Q, \\[4pt]
0 & \text{if } q \notin Q.
\end{cases}
\]
(The value of $\theta$ on $\mathbb{N}\setminus Q$ may be defined arbitrarily.) One then readily verifies that
\[
W_Q^\ast(\psi, \theta) = W^\ast(\overline{\psi}, \theta)
\qquad\text{and}\qquad
s(Q,\psi) = s(\overline{\psi}).
\]
\medskip

\noindent\textbf{Remark 3.} Let us make more remarks regarding Theorem \ref{thm1}$\colon$

\begin{itemize}

\item
In Theorem \ref{thm1}, the Duffin--Schaeffer type condition \(\gcd(A_q + p B_q, q) = 1\) --- where \(A_q\) and \(B_q\) are coprime integers --- is a recurring theme in the literature, appearing in early works such as \cite{S64} and more recent contributions like \cite{BHV24, CT24, K25}. In fact, the formulation of this condition is, to some extent, inspired by Dirichlet's theorem: for any \(\theta \in \mathbb{R}\) and \(q \in \mathbb{N}\), there exists a pair \((A_q, B_q) \in \mathbb{Z} \times \mathbb{N}\) such that
\[
|B_q \theta - A_q| < \frac{1}{q},\qquad 1 \le B_q \le q,\qquad \gcd(A_q, B_q) = 1.
\]
Then \(\left| x -  ({p + \theta})/{q} \right|\) can be well approximated by \(\left| x - ({B_q p + A_q})/({B_q q}) \right|\). Therefore, analogous to the homogeneous Duffin--Schaeffer conjecture, the restriction
\[
\gcd(B_q p + A_q, B_q q) = 1, \text{ or equivalently, }     \gcd(B_q p + A_q, q) = 1
\]
is imposed.

\item
The result  \cite[Theorem 1.4.1]{CH24} of Cai and Hambrook  does not apply directly to our setting for two main reasons. First, our set \(W^*(\psi,\theta)\) involves an additional coprimality constraint \(\gcd(B_q p + A_q, q) = 1\) and is governed by the different convergence condition \eqref{con} (if \(Q = \mathbb{N}\), the condition \eqref{con} is weaker than the one required in \cite{CH24}). Second, a natural approach to obtaining a lower bound for \(\dim_{\mathrm{F}} W^*(\psi,\theta)\) would be to select a suitable subset \(Q \subseteq \mathbb{N}\) such that \(W_Q(1,1;\psi,\theta) \subseteq W^*(\psi,\theta)\). A concrete choice is to take \(Q\) as the set of all prime numbers. However, a direct comparison of the associated \(s\)-parameters reveals that \(s(\psi) \ge s(Q,\psi)\). Consequently, the lower bound provided by the Cai and Hambrook theorem is strictly weaker than the desired lower bound \(2s(\psi)\) for \(W^*(\psi,\theta)\). Hence, establishing the sharp Fourier dimension estimate in our setting calls for a nontrivial refinement of their method.

\item Putting \(A_q = 0\) and \(B_q = 1\) for all \(q \in \mathbb{N}\), we find that Theorem \ref{thm1} not only recovers the classical results of Kaufman and Bluhm (which deal with the homogeneous case and \(\psi(q) = q^{-\tau}\) with \(\tau \ge 1\)) but also establishes a general inhomogeneous version. At the same time, Theorem \ref{thm1} provides an affirmative answer to the coprime version of the Chen and Xiong conjecture.

\item  In the special case where \(A_q = 1\) and \(B_q = q\) for all \(q \in \mathbb{N}\), our result also recovers Theorem 1.4.1 of \cite{CH24} in the one-dimensional setting.

\item Our method may conceivably be applicable to the study of  Fourier dimension in the Duffin–Schaeffer setting for multiplicative and weighted Diophantine approximation, although we do not assert this with certainty.

\end{itemize}

\section{Proof of Theorem \ref{thm1}}

As claimed before, it suffices to prove Corollary \ref{cor2}.
To begin, we recall the limsup set structure of \(W^{\ast}(\psi,\theta)\). For $q\in\mathbb{N},$ define
\begin{equation}\label{I_q}
  \mathcal{I}_q=\Big\{p\in\{0,1,\ldots,q-1\}\colon \gcd(B_qp+A_q,q)=1\Big\}.
\end{equation}
Then
\begin{align*}
W^{\ast}(\psi,\theta)
&=\left\{x\in[0,1]\colon \left|x-\frac{p+\theta(q)}{q}\right|<\frac{\psi(q)}{q} \text{ for i.m. } (p,q)\in \mathbb{Z}\times \mathbb{N},~ \gcd(B_qp+A_q,q)=1\right\}\\
&=\bigcap_{Q=1}^{\infty}\bigcup_{q=Q}^{\infty}\left\{x\in[0,1]\colon \left|x-\frac{p+\theta(q)}{q}\right|<\frac{\psi(q)}{q} \text{ for some } p\in \mathbb{Z},~\gcd(B_qp+A_q,q)=1\right\}\\
&=\bigcap_{Q=1}^{\infty}\bigcup_{q=Q}^{\infty}\left\{x\in[0,1]\colon \left\|x-\frac{p+\theta(q)}{q}\right\|<\frac{\psi(q)}{q} \text{ for some } p\in \mathcal{I}_q\right\}\\
&=:\bigcap_{Q=1}^{\infty}\bigcup_{q=Q}^{\infty}A_q^{\mathcal{I}},    \end{align*}
where \(A_q^{\mathcal{I}}\) can be written explicitly as follows:
\begin{equation}\label{IU}
 A_q^{\mathcal{I}}=\bigcup_{p\in \mathcal{I}_q}B\left(\frac{p+\theta(q)}{q},\frac{\psi(q)}{q}\right) \pmod{1}.
\end{equation}

We divide the proof of Theorem \ref{thm1} into the following two propositions.

\begin{proposition}\label{p1}
    If $\sum_{q\in\mathbb{N}}\psi(q)\prod_{p\mid q,p\nmid B_q}(1-p^{-1})<\infty$, then~$\dim_{{\rm F}}W^{\ast}(\psi,\theta)\le 2s(\psi).$
\end{proposition}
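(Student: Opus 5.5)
The plan is to argue by contradiction through a Borel--Cantelli argument applied to the measure itself. Suppose $\nu\in\mathcal{P}(W^{\ast}(\psi,\theta))$ satisfies $|\hat\nu(\xi)|\ll(1+|\xi|)^{-s/2}$ with $2s(\psi)<s<1$. (We may assume $s(\psi)<1/2$, since otherwise the bound is trivial.) Treat $\nu$ as a measure on $\mathbb{R}/\mathbb{Z}$. We will show $\sum_q\nu(A_q^{\mathcal I})<\infty$. The first Borel--Cantelli lemma then gives $\nu(W^{\ast}(\psi,\theta))=0$, which is a contradiction.

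\textbf{Step 1: smooth majorant.} Fix a smooth bump $g\ge \mathbf 1_{[-1,1]}$ on $\mathbb{R}$ with rapidly decaying $\hat g$, and set
\[
F_q(x)=\sum_{p\in\mathcal I_q}\sum_{m\in\mathbb{Z}} g\Big(\tfrac{q}{\psi(q)}\big(x+m-\tfrac{p+\theta(q)}{q}\big)\Big).
\]
Then $F_q\ge \mathbf 1_{A_q^{\mathcal I}}$, so $\nu(A_q^{\mathcal I})\le\int F_q\,d\nu=\sum_{k\in\mathbb{Z}}\hat F_q(k)\overline{\hat\nu(k)}$. The Fourier coefficients are
\[
\hat F_q(k)=\frac{\psi(q)}{q}\,\hat g\Big(\frac{k\psi(q)}{q}\Big)e\Big(-\frac{k\theta(q)}{q}\Big)\,S_q(k),\qquad S_q(k)=\sum_{p\in\mathcal I_q}e(-kp/q).
\]

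\textbf{Step 2: the exponential sum (key arithmetic input).} Write $\mathbf 1_{p\in\mathcal I_q}=\sum_{d\mid\gcd(B_qp+A_q,q)}\mu(d)$. Any prime dividing both $d$ and $B_q$ cannot divide $B_qp+A_q$, because $\gcd(A_q,B_q)=1$. So only $d\mid q$ with $\gcd(d,B_q)=1$ contribute. For such $d$, the condition $d\mid B_qp+A_q$ fixes $p$ in one residue class mod $d$, since $B_q$ is invertible mod $d$. Summing $e(-kp/q)$ over that class in $\{0,\dots,q-1\}$ gives modulus $q/d$ if $(q/d)\mid k$ and $0$ otherwise. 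Hence:
\begin{itemize}
\item $S_q(0)=|\mathcal I_q|=q\prod_{p\mid q,\,p\nmid B_q}(1-p^{-1})$;
\item $|S_q(k)|\le\sum_{d\mid q,\ (q/d)\mid k}q/d$ for $k\neq 0$.
\end{itemize}
The $k=0$ term therefore contributes $\psi(q)\prod_{p\mid q,p\nmid B_q}(1-p^{-1})$, which is summable by hypothesis. This is exactly where the weaker condition \eqref{con}, rather than $\sum\psi(q)<\infty$, suffices.

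\textbf{Step 3: the nonzero frequencies.} Writing $k=j q/d$, the $k\ne0$ terms are bounded by
\[
\frac{\psi(q)}{q}\sum_{d\mid q}\frac qd\sum_{j\ne0}\Big|\hat g\Big(\frac{j\psi(q)}{d}\Big)\Big|\Big(\frac{jq}{d}\Big)^{-s/2}.
\]
The inner sum is essentially cut off at $|j|\lesssim d/\psi(q)$. Since $s/2<1$, it is $\ll (q/d)^{-s/2}(d/\psi(q))^{1-s/2}$. Each divisor $d$ then contributes $\ll (\psi(q)/q)^{s/2}$, so the total is $\ll\tau(q)(\psi(q)/q)^{s/2}$. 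Choose $\delta>0$ with $s/2-\delta>s(\psi)$ and use $\tau(q)\ll_\varepsilon q^{\varepsilon}$ together with $\psi(q)/q\le q^{-1}$. This bounds the term by $(\psi(q)/q)^{s/2-\delta}q^{\varepsilon-\delta}$, which is summable once $\varepsilon<\delta$. Summing over $q$ completes the contradiction, and letting $s\downarrow 2s(\psi)$ gives $\dim_{\rm F}W^{\ast}(\psi,\theta)\le 2s(\psi)$.

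\textbf{Main obstacle.} The delicate point is Step 2: the coprimality restriction has to be encoded through a Möbius expansion that reduces to complete arithmetic progressions. This works only because $B_q$ is invertible modulo every contributing $d$. The rest of the argument must then lose at most a divisor-function factor, which the $\delta$-slack absorbs. A secondary technical care point is justifying the Parseval step for the periodized function, which is fine because $\hat g$ decays rapidly.
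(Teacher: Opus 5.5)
Your proposal is correct and follows essentially the same route as the paper. Both argue by contradiction through the first Borel--Cantelli lemma and expand $\nu(A_q^{\mathcal I})$ in Fourier series. Both treat the coprimality condition by a M\"{o}bius expansion into complete residue classes (the paper's Lemma~\ref{grs}), so that the zero mode gives the summable term from \eqref{con} and the nonzero modes give $O\bigl((\psi(q)/q)^{s/2-\delta}\bigr)$.

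The differences are only technical. Your smooth majorant $F_q\ge\mathbf 1_{A_q^{\mathcal I}}$ avoids having to justify Parseval for the discontinuous indicator that the paper uses. You also keep the exact bound $\sum_{d\mid q,\,(q/d)\mid k}q/d$ and regroup frequencies as $k=jq/d$, so each divisor contributes exactly $(\psi(q)/q)^{s/2}$. This is cleaner than the paper's cruder $\gcd(q,k)\log q$ bound, which is followed by a $k=tq+j$ case split and the estimate $\sum_{j\le q}\gcd(q,j)\ll q^{1+\varepsilon}$.
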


\begin{proposition}\label{p2}
    $\dim_{{\rm F}}W^{\ast}(\psi,\theta)\ge \min\{2s(\psi),1\}.$
\end{proposition}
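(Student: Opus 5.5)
The plan is to follow the Kaufman--Bluhm and Cai--Hambrook strategy. We build a probability measure on \(W^{\ast}(\psi,\theta)\) as a weak limit of products \(\prod_{j\le J}F_j\), where each \(F_j\) is a smooth, nonnegative, normalized function supported on \(\bigcup_{q\in Q_j}A_q^{\mathcal I}\) for a finite block \(Q_j\) of moduli. If \(s(\psi)=0\) there is nothing to prove. Otherwise, fix \(0<s<\min\{s(\psi),\tfrac12\}\); it suffices to produce \(\mu\in\mathcal P(W^{\ast}(\psi,\theta))\) with \(|\hat\mu(\xi)|\ll(1+|\xi|)^{-s}\).

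\textbf{Choosing the blocks.} Since \(\sum_q(\psi(q)/q)^{s'}=\infty\) for some \(s<s'<s(\psi)\), we can dyadically pigeonhole in both \(q\) and \(\psi(q)/q\). This yields infinitely many pairs \((M,\delta)\) and sets \(Q(M,\delta)\subseteq[M,2M)\) with \(\psi(q)/q\asymp\delta\) on \(Q(M,\delta)\) and \(\#Q(M,\delta)\,\delta^{s'}\gg (\log M)^{-2}\). In particular \(\#Q(M,\delta)\ge \delta^{-s}\) once \(M\) is large, and trivially \(\#Q(M,\delta)\le 2M\).

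\textbf{The single-modulus function.} For a modulus \(q\), let \(\phi_\delta\) be a smooth bump of width \(\delta\) and mass one. Set
\[
F_q(x)=\frac{1}{|\mathcal I_q|}\sum_{p\in\mathcal I_q}\phi_\delta\!\left(x-\tfrac{p+\theta(q)}{q}\right)
\]
(periodized). Its Fourier coefficients are
\[
\hat F_q(k)=\hat\phi_\delta(k)\,e\!\left(-\tfrac{k\theta(q)}{q}\right)\frac{1}{|\mathcal I_q|}\sum_{p\in\mathcal I_q}e\!\left(-\tfrac{kp}{q}\right).
\]
The key arithmetic input is a Ramanujan-sum-type bound for this restricted sum. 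Factor \(q=q_1q_2\), where \(q_1\) collects the primes dividing \(B_q\) and \(q_2\) those not dividing \(B_q\). For primes \(\ell\mid B_q\), the condition \(\gcd(B_qp+A_q,\ell)=1\) is automatic, since \(\gcd(A_q,B_q)=1\). For \(\ell\mid q_2\), the map \(p\mapsto B_qp+A_q\) is a bijection modulo \(\ell^{v}\). By CRT the sum therefore factors into a complete geometric sum modulo \(q_1\), which vanishes unless \(q_1\mid k\), times a twisted Ramanujan sum modulo \(q_2\), bounded by \(\gcd(k,q_2)\). Also \(|\mathcal I_q|=q\prod_{\ell\mid q,\ell\nmid B_q}(1-\ell^{-1})\gg q/\log\log q\). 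Together these give
\[
|\hat F_q(k)|\ll \frac{\gcd(k,q)\log\log q}{q}\,|\hat\phi_\delta(k)|\quad (k\neq0),
\]
together with the rapid decay \(|\hat\phi_\delta(k)|\ll_N(1+\delta|k|)^{-N}\). I expect this computation, carried out uniformly in \(A_q,B_q\), to be the main technical obstacle; it is exactly where the coprimality structure replaces the primes-only trick of Remark 3.

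\textbf{Averaging over the block.} Put \(F=\frac1{\#Q}\sum_{q\in Q(M,\delta)}F_q\). Then \(\hat F(0)=1\), and for \(k\ne0\), using \(\sum_{M\le q<2M}\gcd(k,q)\ll M\,\tau(k)\) we get
\[
|\hat F(k)|\ll \frac{\tau(k)\log\log M}{\#Q}\,(1+\delta|k|)^{-N}.
\]
We now compare this with \(|k|^{-s}\) in two ranges. For \(|k|\le\delta^{-1-\varepsilon}\), we have \(\#Q\ge\delta^{-s'}/(\log M)^2\ge|k|^{s''}\) for some \(s<s''<s'\), after absorbing logarithms and \(\tau(k)\ll|k|^{\varepsilon}\). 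This requires \(\delta\) small relative to \(M\) in a mild sense, which holds since \(s'<\tfrac12\) gives \(\delta\le M^{-1}\)-type sizes, up to adjustment; otherwise we use \(\#Q\gg M^{1-o(1)}\) directly, which gives decay exponent \(\tfrac12\) or better. For \(|k|\ge\delta^{-1-\varepsilon}\), the bump decay dominates. Hence \(|\hat F(k)|\ll|k|^{-s}\) for all \(k\ne0\).

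\textbf{Iterating.} Finally, run Kaufman's iterative lemma, as in \cite{CH24}. Choose blocks \((M_j,\delta_j)\) so sparse that \(F_1\cdots F_{j-1}\) is essentially constant at scale \(1/M_j\), and so that \(\|\widehat{F_1\cdots F_{j-1}}\|_{\ell^1}\) is negligible against \(M_j^{\varepsilon}\). Then the convolution estimate
\[
\widehat{GF_j}(\xi)=\sum_k \hat G(\xi-k)\hat F_j(k)
\]
shows that the decay \((1+|\xi|)^{-s}\) persists uniformly in \(j\), with \(\int\prod_{j\le J}F_j\) bounded below. Any weak limit \(\mu\) is then supported in \(\bigcap_J\bigcup_{q\ge M_J}A_q^{\mathcal I}=W^{\ast}(\psi,\theta)\) and has the required decay. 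Letting \(s\uparrow\min\{s(\psi),\tfrac12\}\) yields \(\dim_{\rm F}W^{\ast}(\psi,\theta)\ge\min\{2s(\psi),1\}\).
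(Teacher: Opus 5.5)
Your proposal is correct and follows essentially the paper's route. Both arguments average smoothed, coprimality-restricted grids over a dyadic block of moduli and bound the restricted exponential sums by $\gcd(k,q)$ up to logarithms; your CRT factorization gives the clean bound $\gcd(k,q)$, slightly sharper than the paper's M\"obius-inversion bound $\gcd(k,q)\log q$. Both then sum $\gcd(k,q)$ over the block to get a $\tau(k)$ factor and run the Kaufman--Bluhm stability iteration.

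The one muddled point is your hedge about $\delta$ being small relative to $M$. No case split is needed, and the claimed $\#Q\gg M^{1-o(1)}$ is unjustified anyway. The reason is that $\psi(q)<\tfrac12$ and $q\ge M$ already force $\delta\ll 1/M$, and this is all the comparison with $|k|^{-s}$ requires.
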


\subsection{Proof of Proposition \ref{p1}}
We proceed by contradiction. To this end, we assume that
\[
\dim_{\mathrm{F}} W^{\ast}(\psi,\theta) = 2s > 2s(\psi),
\]
which implies the existence of a Borel probability measure \(\nu\) supported on \(W^{\ast}(\psi,\theta)\) such that its Fourier transform satisfies
\[
|\widehat{\nu}(\xi)| \ll |\xi|^{-s} \quad \text{for } |\xi| \ge 1.
\]
Since $s > s(\psi)$, for any $0 < \varepsilon < s - s(\psi)$ we have
\[
\sum_{q \in \mathbb{N}} \left( \frac{\psi(q)}{q} \right)^{s - \varepsilon} < \infty.
\]

We will obtain a contradiction by showing that $\nu(W^{\ast}(\psi,\theta)) = 0$. This is achieved by exploiting the limsup set structure of $W^{\ast}(\psi,\theta)$ and applying the first Borel--Cantelli lemma. To begin, we estimate the $\nu$-measure of $A_q^{\mathcal{I}}$. For $p \in I_q$, define
\[
A_{p,q} = B\!\left( \frac{p + \theta(q)}{q}, \frac{\psi(q)}{q} \right) \pmod{1}.
\]
Since \(\psi(q) < 1/2\), the sets \(\{A_{p,q}\colon p \in \mathcal{I}_q\}\) are pairwise disjoint. Consequently, by (\ref{IU}),
\[
\nu(A_q^{\mathcal{I}}) = \sum_{p \in \mathcal{I}_q} \nu(A_{p,q}).
\]

Let $\mathbf{I}_{A_{p,q}}$ denote the indicator function of $A_{p,q}$,  extended to be periodic with respect to \(\mathbb{Z}\).  As usual, we write $e(x) = \exp(2\pi i x)$. The Fourier decay of $\nu$ ensures the convergence of the series $\sum_{k \in \mathbb{Z}} \widehat{\mathbf{I}}_{A_{p,q}}(k) \overline{\widehat{\nu}(k)}$, where
\[
\widehat{\mathbf{I}}_{A_{p,q}}(k) = \int_{B\left( \frac{p+\theta(q)}{q}, \frac{\psi(q)}{q} \right)} e(-k x) \, \mathrm{d}x, \] and \[ \widehat{\mathbf{I}}_{A_{p,q}}(0)=\frac{2\psi(q)}{q},\quad
|\widehat{\mathbf{I}}_{A_{p,q}}(k)| \ll \frac{1}{|k|}~~\big(k\in \mathbb{Z}\setminus \{0\}\big).
\]
Applying Parseval's identity, we obtain
\[
\nu(A_{p,q}) = \sum_{k \in \mathbb{Z}} \widehat{\mathbf{I}}_{A_{p,q}}(k) \overline{\widehat{\nu}(k)}.
\]
Therefore,
\begin{align*}
\nu(A_q) &= \sum_{p \in \mathcal{I}_q} \sum_{k \in \mathbb{Z}} \widehat{\mathbf{I}}_{A_{p,q}}(k) \overline{\widehat{\nu}(k)} \\
&= \sum_{p \in \mathcal{I}_q} \frac{2\psi(q)}{q} + \sum_{p \in \mathcal{I}_q} \sum_{k \in \mathbb{Z} \setminus \{0\}} \widehat{\mathbf{I}}_{A_{p,q}}(k) \overline{\widehat{\nu}(k)} \\
&= 2\psi(q) \prod_{ {p \mid q,~p \nmid B_q}} \bigl(1 - p^{-1}\bigr) + \sum_{p \in \mathcal{I}_q} \sum_{k \in \mathbb{Z} \setminus \{0\}} \widehat{\mathbf{I}}_{A_{p,q}}(k) \overline{\widehat{\nu}(k)}.
\end{align*}
The last equality follows from \cite[Proposition 1]{BHV24}: the cardinality of \( \mathcal{I}_q\) is
\begin{equation}\label{cardIq}
  \# \mathcal{I}_q = \phi(q) \prod_{p \mid (q, B_q)} \left(1 + \frac{1}{p-1}\right) = q \prod_{ {p \mid q,~p \nmid B_q}} \bigl(1 - p^{-1}\bigr),
\end{equation}
where we have used the standard identity $\displaystyle \frac{q}{\phi(q)} = \prod_{p \mid q} \left(1 + \frac{1}{p-1}\right)$.

To facilitate subsequent estimates, we prove the following auxiliary lemma giving a uniform upper bound for the general Ramanujan sum.
\begin{lem}\label{grs}
Let $A, B$ be coprime integers with $B>0.$ Then for $q,k\in\mathbb{N},$ we have
\[\left|\sum_{\substack{0\le p\le q-1 \\ \gcd(Bp+A,q)=1}}e(-k\cdot \frac{p}{q})\right|\ll \gcd(q,k)\cdot \log q,\]
where the implied constant in Vinogradov’s notation is independent of $A,B,q,k.$
\end{lem}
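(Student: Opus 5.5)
We would prove Lemma~\ref{grs} by Möbius inversion over the divisors of $q$, reducing the sum to ordinary geometric sums over arithmetic progressions. Write $q=q_1q_2$, where $q_1$ collects the prime factors of $q$ dividing $B$ and $q_2$ is coprime to $B$. If a prime $\ell\mid q$ divides $B$, then $\ell\nmid A$ since $\gcd(A,B)=1$, so $\ell\nmid Bp+A$ automatically. Hence the condition $\gcd(Bp+A,q)=1$ is equivalent to $\gcd(Bp+A,q_2)=1$.

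Next we detect coprimality with $\sum_{d\mid \gcd(Bp+A,q_2)}\mu(d)$ and exchange sums:
\[
S:=\sum_{d\mid q_2}\mu(d)\sum_{\substack{0\le p\le q-1\\ d\mid Bp+A}} e\Big(-\frac{kp}{q}\Big).
\]
Since $\gcd(B,d)=1$, the congruence $Bp\equiv -A \pmod d$ has a unique solution $p\equiv r_d\pmod d$. Because $d\mid q$, the inner sum runs over $p=r_d+dm$ with $0\le m\le q/d-1$, and equals
\[
e\Big(-\frac{kr_d}{q}\Big)\sum_{m=0}^{q/d-1}e\Big(-\frac{km}{q/d}\Big).
\]
This geometric sum equals $q/d$ if $(q/d)\mid k$ and $0$ otherwise. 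Therefore
\[
|S|\le \sum_{\substack{d\mid q\\ (q/d)\mid k}} \frac qd \le \sum_{e\mid \gcd(q,k)} e=\sigma(\gcd(q,k)),
\]
where we substituted $e=q/d$, a divisor of both $q$ and $k$.

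Finally, we use the classical bound $\sigma(n)\ll n\log\log n$, or more crudely $\sigma(n)=n\sum_{e\mid n}1/e\le n(1+\log n)$. With $n=\gcd(q,k)\le q$ this gives $|S|\ll \gcd(q,k)\log q$ for $q\ge 2$; the case $q=1$ is trivial, since the sum has absolute value at most $1$. The implied constant is absolute, independent of $A,B,q,k$.

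We do not expect a serious obstacle. The only delicate point is the reduction to $q_2$: it ensures $B$ is invertible modulo every $d$ appearing in the Möbius sum, so each progression has exactly one residue class and the geometric sum is exact.
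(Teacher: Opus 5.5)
Your proof is correct and follows the paper's argument essentially step for step. The shared steps are: Möbius inversion over divisors coprime to $B$ (your reduction to $q_2$ is the paper's observation that terms with $\gcd(l,B)\neq 1$ vanish), the exact geometric sum over a single residue class, and the bound $\sum_{l\mid q,\,(q/l)\mid k} q/l=\sigma(\gcd(q,k))\le \gcd(q,k)(1+\log q)$, which is the paper's harmonic-sum substitution $t=l\gcd(k,q)/q$ in different notation.

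One small correction: the case $q=1$ is not ``trivial''. There the sum equals $1$ while $\gcd(1,k)\log 1=0$, so the stated inequality actually fails. This is a harmless defect of the statement itself, shared by the paper, and the lemma should be read with $\log q$ replaced by $1+\log q$, or with $q\ge 2$.
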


\begin{proof}
Recall the well-known property of the M\"{o}bius function $\mu\colon$

\[
\sum_{l \mid d} \mu(l) =
\begin{cases}
0 & \text{if } d > 1, \\
1 & \text{if } d = 1.
\end{cases}
\]
Then, we have
\begin{align*}
\sum_{\substack{0\le p\le q-1 \\ \gcd(Bp+A,q)=1}}e(-k\cdot \frac{p}{q})&=\sum_{p=0}^{q-1}e(-k\cdot \frac{p}{q})\cdot\sum_{l\mid \gcd(Bp+A,q)}\mu(l).
\end{align*}
If \(\gcd(l, B) \neq 1\), then since \(\gcd(A, B) = 1\), we have \(l \nmid Bp + A\) for any \(p \in \mathbb{Z}\).
If, on the other hand, \(\gcd(l, B) = 1\), we can find an inverse \(B^{(l)} \in \mathbb{N}\) of \(B\) modulo \(l\), i.e., \(B^{(l)} B \equiv 1 \pmod{l}\). Consequently,
\[
Bp + A \equiv 0 \pmod{l} \quad \iff \quad p \equiv -B^{(l)} A \pmod{l}.
\]
Thus we obtain
\begin{align*}
  \sum_{\substack{0\le p\le q-1 \\ \gcd(Bp+A,q)=1}}e(-k\cdot \frac{p}{q})
  &=\sum_{l\mid q }\mu(l)\cdot\sum_{\substack{0\le p\le q-1 \\ l\mid Bp+A}} e(-k\cdot \frac{p}{q})\\
  &=\sum_{\substack{l\mid q \\ \gcd(l,B)=1}}\mu(l)\cdot\sum_{\substack{0\le p\le q-1 \\ p \equiv -B^{(l)} A\pmod{l}}} e(-k\cdot \frac{p}{q}).
  \end{align*}
For the inner sum, we have
\[
\sum_{\substack{0 \leq p \leq q-1 \\ p \equiv -B^{(l)}A \pmod{l}}} e\!\left(-k \cdot \frac{p}{q}\right)
= \sum_{t=0}^{q/l - 1} e\!\left(-k \cdot \frac{p_0 + lt}{q}\right)
= e\!\left(-k \cdot \frac{p_0}{q}\right) \sum_{t=0}^{q/l - 1} e\!\left(-k \cdot \frac{t}{q/l}\right),
\]
where \(p_0\) is the unique residue modulo \(l\) satisfying \(p_0 \equiv -B^{(l)}A \pmod{l}\) with \(0 \le p_0 < l\).
The inner geometric sum vanishes unless \((q/l) \mid k\), in which case it equals \(q/l\). That is,
\[
\sum_{\substack{0 \leq p \leq q-1 \\ p \equiv -B^{(l)}A \pmod{l}}} e\!\left(-k \cdot \frac{p}{q}\right)
=
\begin{cases}
e\!\left(-k \cdot \frac{p_0}{q}\right) \cdot \dfrac{q}{l} & \text{if } \dfrac{q}{l} \mid k,\\[6pt]

0 & \text{otherwise}.
\end{cases}
\]
Inserting this into the outer sum over \(l \mid q\) and \(\gcd(l, B) = 1\) yields
\[
\sum_{\substack{l\mid q \\ \gcd(l,B)=1}} \mu(l) \sum_{\substack{0 \leq p \leq q-1 \\ p \equiv -B^{(l)}A \pmod{l}}} e\!\left(-k \cdot \frac{p}{q}\right)
= \sum_{\substack{l \mid q,\; \frac{q}{l} \mid k \\ \gcd(l,B)=1}} \mu(l) \, e\!\left(-k \cdot \frac{p_0}{q}\right) \frac{q}{l}.
\]
Taking absolute values, noting that \(|\mu(l)|\le 1\) and \(|e(x)| = 1\), and then removing the restriction condition \(\gcd(l,B)=1\) on the summation index \(l\), we obtain
\[
\left|\sum_{\substack{0\le p\le q-1 \\ \gcd(Bp+A,q)=1}} e\!\left(-k\cdot \frac{p}{q}\right)\right|
\le \sum_{l\mid q,\; \frac{q}{l}\mid k} \frac{q}{l}.
\]
If \(l\mid q\) and \((q/l)\mid k\), then \((q/l)\mid \gcd(q,k)\), and hence \(q\mid l\gcd(k,q)\). Setting \(t = l\gcd(k,q)/q\), we deduce that
\[
\sum_{l\mid q,\; \frac{q}{l}\mid k} \frac{q}{l}
\le \sum_{t=1}^{q} \frac{\gcd(q,k)}{t}
\ll \gcd(q,k)\cdot \log q,
\]
as desired.
\end{proof}

\begin{lem}\label{v-meas}
    For $\varepsilon>0$ we have
    \[\nu(A_q)\ll 2\psi(q) \prod_{ {p\mid q,~ p\nmid B_q}} \bigl(1 - p^{-1}\bigr)+ \left(\frac{\psi(q)}{q}\right)^{s-\varepsilon}.\]
\end{lem}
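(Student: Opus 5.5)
Starting from the decomposition of $\nu(A_q)$ already obtained above, the main term $2\psi(q)\prod_{p\mid q,\,p\nmid B_q}(1-p^{-1})$ is already present. It remains to bound the error term
\[
E_q:=\sum_{p\in\mathcal{I}_q}\sum_{k\neq 0}\widehat{\mathbf{I}}_{A_{p,q}}(k)\overline{\widehat{\nu}(k)}
\]
by $(\psi(q)/q)^{s-\varepsilon}$.

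First I would compute $\widehat{\mathbf{I}}_{A_{p,q}}(k)$ explicitly. Since $A_{p,q}$ is a translate of the interval $B(0,\psi(q)/q)$ by $(p+\theta(q))/q$,
\[
\widehat{\mathbf{I}}_{A_{p,q}}(k)=e\!\left(-k\tfrac{p+\theta(q)}{q}\right)\widehat{\mathbf{I}}_{B(0,\psi(q)/q)}(k).
\]
Here $|\widehat{\mathbf{I}}_{B(0,\psi(q)/q)}(k)|\le\min\{2\psi(q)/q,\,1/(\pi|k|)\}$. Interchanging the sums, the factor $e(-k\theta(q)/q)$ has modulus one and drops out, and the $p$-sum becomes exactly the generalized Ramanujan sum of Lemma \ref{grs} with $A=A_q$, $B=B_q$. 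This gives
\[
|E_q|\ll \log q\sum_{k\neq0}\gcd(q,k)\min\Bigl\{\frac{\psi(q)}{q},\frac1{|k|}\Bigr\}|k|^{-s}.
\]

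Next I would split the $k$-sum at $|k|=q/\psi(q)$. I would organise it by $d=\gcd(q,k)$ and write $k=dm$, so that
\[
\sum_{d\mid q} d\sum_{m\ge1}\min\Bigl\{\frac{\psi(q)}{q},\frac1{dm}\Bigr\}(dm)^{-s}.
\]
In the range $dm\le q/\psi(q)$ the inner sum is $\le \frac{\psi(q)}{q}d^{-s}\sum_{m\le q/(d\psi(q))}m^{-s}\ll \frac{\psi(q)}{q}d^{-s}(q/(d\psi(q)))^{1-s}$, using $s<1$; the case $s\ge 1$ is harmless, since then we may shrink $s$. Multiplying by $d$ gives $(\psi(q)/q)^{s}$. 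In the tail range the inner sum is $\le d^{-1-s}\sum_{m>q/(d\psi)}m^{-1-s}\ll d^{-1-s}(d\psi/q)^{s}$, and multiplying by $d$ again gives $(\psi(q)/q)^s$. Hence each divisor $d$ contributes $\ll(\psi(q)/q)^s$, and summing over $d\mid q$ yields
\[
|E_q|\ll \tau(q)\log q\,(\psi(q)/q)^s .
\]

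Finally I would absorb $\tau(q)\log q\ll_\varepsilon q^{\varepsilon'}$. This step is the only delicate one: we need $q^{\varepsilon'}(\psi/q)^s\ll(\psi/q)^{s-\varepsilon}$, i.e.\ $q^{\varepsilon'}\le(q/\psi)^{\varepsilon}$. This holds because $q/\psi(q)\ge 2q$, so choosing $\varepsilon'=\varepsilon$ works. Small $q$ only affect constants, and $\psi(q)=0$ is trivial since then $A_q=\emptyset$. The main obstacle is thus keeping the $\gcd(q,k)$ factor from Lemma \ref{grs} under control. The divisor-splitting shows that it costs only a divisor-function factor, which the $\varepsilon$ loss absorbs.
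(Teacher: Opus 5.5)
Your proof is correct, but it organizes the $k$-sum differently from the paper. Both arguments use Lemma~\ref{grs} together with $|\widehat{\mathbf{I}}_{A_{p,q}}(k)|\le\min\{2\psi(q)/q,\,1/(\pi|k|)\}$, and both finish by absorbing a $\tau(q)\log q$ loss into $(q/\psi(q))^{\varepsilon}$.

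The paper writes $k=tq+j$ with $1\le j\le q$. The block $t=0$ is handled with $\sin x\le x$ and the same divisor decomposition $j=dm$ that you use. The blocks $t\ge1$ are split again at $t\approx1/\psi(q)$ and controlled by the external gcd-sum estimate $\sum_{j=1}^{q}\gcd(q,j)\ll q^{1+\varepsilon/2}$ from \cite{B01}.

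You instead decompose every $k$ by $d=\gcd(q,k)$ and cut once at $|k|=q/\psi(q)$, which is exactly where the coefficient bound changes regime. This has three advantages:
\begin{itemize}
\item It treats both regimes uniformly.
\item It needs no outside input; the cited gcd-sum bound is in any case just $\sum_{d\mid q}d\,\phi(q/d)\le q\,\tau(q)$.
\item It gives the cleaner intermediate estimate $|E_q|\ll\tau(q)\log q\,(\psi(q)/q)^{s}$.
\end{itemize}
By contrast, the paper's residue-class cut at $k=q$ forces it to reintroduce the true transition at $k\approx q/\psi(q)$ inside the $t\ge1$ range.

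A minor point: your aside about $s\ge1$ is not needed. In context $2s=\dim_{\rm F}W^{\ast}(\psi,\theta)\le1$, so $s\le\tfrac12$. Shrinking $s$ would also only prove a weaker bound than the one stated, though that bound would still suffice for the Borel--Cantelli step.
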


\begin{proof}
Recall that
\[
\nu(A_q)=2\psi(q) \prod_{ {p\mid q,~p\nmid B_q}} \bigl(1 - p^{-1}\bigr)+\sum_{p\in\mathcal{I}_q}\sum_{k\in\mathbb{Z}\setminus\{0\}}\widehat{\mathbf{I}}_{A_{p,q}}(k)\overline{\widehat{\nu}(k)}.
\]

We now estimate the second term on the right-hand side of the equality. Using the explicit form of the Fourier coefficient, we have
\begin{align*}
\sum_{p\in\mathcal{I}_q}\sum_{k\in\mathbb{Z}\setminus\{0\}}\widehat{\mathbf{I}}_{A_{p,q}}(k)\overline{\widehat{\nu}(k)}
&= \sum_{p\in\mathcal{I}_q}\sum_{k\in\mathbb{Z}\setminus\{0\}} e\!\left(-k \cdot \frac{p+\theta(q)}{q}\right) \int_{B(0,\frac{\psi(q)}{q})} e(-kx)\,\mathrm{d}x \cdot \overline{\widehat{\nu}(k)} \\
&\ll \log q \sum_{k=1}^\infty \gcd(q,k) \cdot \frac{\bigl| \sin\bigl(2\pi k \cdot \frac{\psi(q)}{q}\bigr) \bigr|}{k^{s+1}} \\
&= \log q \sum_{t=0}^{\infty}\sum_{j=1}^{q} \gcd(q,j) \cdot  \frac{\bigl| \sin\bigl(2\pi (tq+j) \cdot \frac{\psi(q)}{q}\bigr) \bigr|}{(tq+j)^{s+1}},
\end{align*}
where the inequality follows from Lemma \ref{grs}. %Here, $t \ge 0$ is an integer and $j$ runs over residue classes modulo $q$.
\medskip

\underline{\textbf{Case 1: $t = 0$.}}~  Using the bound $\sin x \le x$ for $x \ge 0$, we get
\[
\log q \sum_{j=1}^{q} \gcd(q,j) \cdot \frac{\bigl| \sin\bigl(2\pi j \cdot \frac{\psi(q)}{q}\bigr) \bigr|}{j^{s+1}}
\ll \log q \sum_{j=1}^{q} \gcd(q,j) \cdot \frac{1}{j^{s}} \cdot \frac{\psi(q)}{q}.
\]
Set $d = \gcd(q,j)$ and write $j = kd$ with $\gcd(k, q/d) = 1$. Then the right-hand side of the above inequality is
\[
\begin{aligned}
&=\frac{\psi(q)\log q}{q} \sum_{d\mid q} \sum_{\substack{1\le k \le q/d \\ \gcd(k, q/d)=1}} d \cdot \frac{1}{(kd)^{s}} \\
&= \frac{\psi(q)\log q}{q} \sum_{d\mid q} d^{1-s} \sum_{\substack{1\le k \le q/d \\ \gcd(k, q/d)=1}} \frac{1}{k^{s}} \\
&\le \frac{\psi(q)\log q}{q} \sum_{d\mid q} d^{1-s} \cdot \left(\frac{q}{d}\right)^{1-s} \\
&= \frac{\psi(q)\log q}{q^{s}} \cdot \tau(q)\ll \left(\frac{\psi(q)}{q}\right)^{s-\varepsilon},
\end{aligned}
\]
where $\tau(q)$ denotes the number of positive divisors of $q$. The last inequality uses the facts that $\tau(q) = O(q^{\varepsilon/2})$ and that the factor $\log q$ can be absorbed into the $q^{\varepsilon/2}$ term.
\medskip

\underline{\textbf{Case 2: $t \ne 0$.}}~In this case, we have
\begin{align*}
&\log q \sum_{t=1}^{\infty}\sum_{j=1}^{q} \gcd(q,j) \cdot  \frac{\bigl| \sin\bigl(2\pi (tq+j) \cdot \frac{\psi(q)}{q}\bigr) \bigr|}{(tq+j)^{s+1}}\\
\ll& \log q \sum_{j=1}^{q} \gcd(q,j) \cdot\left(\sum_{1\le t\le \frac{1}{\psi(q)}-1}\frac{(t+1)\psi(q)}{(tq)^{s+1}}+\sum_{t>\frac{1}{\psi(q)}-1}\frac{1}{(tq)^{s+1}}\right)\\
\ll & \left(\frac{\psi(q)}{q}\right)^{s-\varepsilon},
\end{align*}
where the last inequality follows from \cite[Theorem 3.2]{B01} that for all $\varepsilon>0,$
\[\sum_{j=1}^{q} \gcd(q,j)=O(q^{1+\frac{\varepsilon}{2}}).\]

Having shown that both cases lead to the desired estimate, we conclude the proof.

\end{proof}

Finally, by Lemma~\ref{v-meas}, we have
\[
\sum_{q\in\mathbb{N}} \nu(A_q) \ll \sum_{q\in\mathbb{N}} \left( 2\psi(q) \prod_{ {p\mid q,~p\nmid B_q}} \bigl(1 - p^{-1}\bigr) + \left(\frac{\psi(q)}{q}\right)^{s-\varepsilon} \right) < \infty.
\]
Applying the first Borel–Cantelli lemma, we conclude that \(\nu\bigl(W^{\ast}(\psi,\theta)\bigr) = 0\), which gives the desired contradiction needed to prove Proposition~\ref{p1}.

%\footnote{Mertens' theorem$\colon$ $\prod\limits_{p\le q}(1-\frac{1}{p})=\frac{e^{-\gamma}}{\log q}(1+O(\frac{1}{\log q})),$ where $\gamma$ is the Euler's constant.}.

\subsection{Proof of Proposition \ref{p2}}

We construct a probability measure $\nu$ supported on $W^{\ast}(\psi,\theta)$ with Fourier decay
\[
\widehat{\nu}(\xi)=O\bigl((1+|\xi|)^{-(\min\{2s(\psi),1\}-\varepsilon)}\bigr) \qquad (\xi \in \mathbb{R}),
\]
where $\varepsilon>0$. The measure \(\nu\) that we construct will be the limit of a sequence of absolutely continuous measures, the densities of which depend on a sequence \((M_k)\) (which will be constructed recursively as in Lemma~\ref{stability}). Our construction is a variant of the Cai and Hambrook construction.

Before stating Lemma \ref{stability}, we need to introduce some functions.

Fix a large integer \(K\), say \(K > s(\psi) + 4\), and let \(f \colon \mathbb{R} \to \mathbb{R}\) be a nonnegative function with \(\operatorname{supp}(f) \subseteq (-1,1)\) and \(\int_{\mathbb{R}} f(x) \, \mathrm{d}x = 1\), and
\begin{equation*}
   |\widehat{f}(\xi)| \ll (1 + |\xi|)^{-K} \qquad (\xi \in \mathbb{R}).
\end{equation*}
Let $q \in \mathbb{N}$. For $x \in \mathbb{R}$, define
\[
\Phi_{q,\theta}(x) = \frac{1}{\# \mathcal{I}_q} \cdot \frac{q}{\psi(q)} \sum_{p \in \mathcal{I}_q} f\!\left( \frac{x - (p+\theta(q))/q}{\psi(q)/q} \right),
\]
where $\mathcal{I}_q$ is defined in (\ref{I_q}). Applying Poisson summation, we obtain
\[
\Phi_{q,\theta}^{\ast}(x) := \sum_{k \in \mathbb{Z}} \Phi_{q,\theta}(x + k) = \sum_{k \in \mathbb{Z}} \widehat{\Phi_{q,\theta}}(k) e(kx),
\]
where the Fourier coefficients are given by
\begin{align*}
\widehat{\Phi_{q,\theta}}(k)
&= \frac{1}{\# \mathcal{I}_q} \cdot \frac{q}{\psi(q)} \sum_{p \in \mathcal{I}_q} \int_{\mathbb{R}} f\!\left( \frac{x - (p+\theta(q))/q}{\psi(q)/q} \right) e(-kx) \, \mathrm{d}x \\
&= \frac{1}{\# \mathcal{I}_q} \sum_{p \in \mathcal{I}_q} e\!\left( k \cdot \frac{p+\theta(q)}{q} \right) \widehat{f}\!\left( \frac{\psi(q)}{q} k \right).
\end{align*}
We note that  $\Phi_{q,\theta}^{\ast}(x)$ is 1-periodic, and  if $\Phi_{q,\theta}^{\ast}(x) > 0$, then there exists $p \in \mathcal{I}_q$ such that
\begin{equation}\label{positive}
    \left\| x - \frac{p+\theta(q)}{q} \right\| < \frac{\psi(q)}{q}.
\end{equation}

Recall that
\[
s(\psi) = \inf \left\{ s \in [0,1] : \sum_{q=1}^{\infty} \left( \frac{\psi(q)}{q} \right)^{\!s} < \infty \right\}.
\]
Then for any $\eta < s(\psi)$, we have
\[
\sum_{q=1}^{\infty} \left( \frac{\psi(q)}{q} \right)^{\!\eta}
= \sum_{k=0}^{\infty} \sum_{q \in Q(2^k)} \left( \frac{\psi(q)}{q} \right)^{\!\eta} = \infty,
\]
where
\[
Q(2^k) = \left\{ q \in \mathbb{N} : \frac{1}{2^{k+1}} \le \left( \frac{\psi(q)}{q} \right)^{\!\eta} < \frac{1}{2^{k}} \right\}.
\]
This implies that there are infinitely many $k$ such that
\begin{equation}\label{con1}
\sum_{q \in Q(2^k)} \left( \frac{\psi(q)}{q} \right)^{\!\eta} \ge \frac{1}{k^{2}}.
\end{equation}
Collect all such $k$ and define
\[
\mathcal{M} = \left\{ 2^{k} : \sum_{q \in Q(2^k)} \left( \frac{\psi(q)}{q} \right)^{\!\eta} \ge \frac{1}{k^{2}} \right\}.
\]
Then for each $M \in \mathcal{M}$, by \eqref{con1} we obtain
\begin{equation*}
\# Q(M) \cdot \frac{1}{M} \;\ge\; \sum_{q \in Q(M)} \left( \frac{\psi(q)}{q} \right)^{\!\eta} \;\gg\; \frac{1}{(\log M)^{2}}.
\end{equation*}
Consequently,
\begin{equation}\label{Q(M)}
\# Q(M) \;\gg\; \frac{M}{(\log M)^{2}}.
\end{equation}

For each $M \in \mathcal{M}$, we define a density function $g_M$ by
\[
g_M(x) := \frac{1}{\# Q(M)} \sum_{q \in Q(M)} \Phi_{q,\theta}^{\ast}(x)
      = \frac{1}{\# Q(M)} \sum_{q \in Q(M)} \sum_{k \in \mathbb{Z}} \widehat{\Phi_{q,\theta}}(k) e(kx).\]
\begin{lem}\label{subset}
    If $g_M(x)>0,$ then there exists $q\in Q(M)$ such that
    \[\left\|x-\frac{p+\theta(q)}{q}\right\|<\frac{\psi(q)}{q} \text{ for some } p\in \mathcal{I}_q.\]
\end{lem}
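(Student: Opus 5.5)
The plan is to reduce the statement to the support property \eqref{positive} of the individual functions $\Phi_{q,\theta}^{\ast}$. The only subtlety is that $g_M$ is defined by two expressions, a finite average of periodizations and a Fourier series. I will work with the first expression, $g_M(x)=\frac{1}{\#Q(M)}\sum_{q\in Q(M)}\Phi^{\ast}_{q,\theta}(x)$, since it is a pointwise identity.

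\textbf{Step 1 (finiteness and nonnegativity).} First I check that $Q(M)$ is finite, so that $g_M$ is a finite average. For $q\in Q(M)$ we have $(\psi(q)/q)^{\eta}\ge 1/(2M)$. Since $\psi(q)<1/2$, this forces $q\le (2M)^{1/\eta}$ when $\eta>0$, and we may take $\eta>0$ since $\eta<s(\psi)$ can be chosen positive whenever $s(\psi)>0$. Next, each $\Phi^{\ast}_{q,\theta}(x)=\sum_{k\in\mathbb{Z}}\Phi_{q,\theta}(x+k)$ is a sum of nonnegative terms, because $f\ge0$. Only finitely many terms are nonzero, since $f$ has compact support. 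Hence every $\Phi^{\ast}_{q,\theta}(x)\ge 0$.

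\textbf{Step 2 (one summand is positive).} If $g_M(x)>0$, then a finite sum of nonnegative numbers is positive, so some $q\in Q(M)$ has $\Phi^{\ast}_{q,\theta}(x)>0$. In turn some $k\in\mathbb{Z}$ and some $p\in\mathcal I_q$ satisfy
\[
f\!\left(\frac{x+k-(p+\theta(q))/q}{\psi(q)/q}\right)>0 .
\]

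\textbf{Step 3 (support gives the distance bound).} Because $\operatorname{supp} f\subseteq(-1,1)$, the positivity in Step 2 gives $\bigl|x+k-\frac{p+\theta(q)}{q}\bigr|<\frac{\psi(q)}{q}$. Taking the distance to the nearest integer then yields $\bigl\|x-\frac{p+\theta(q)}{q}\bigr\|<\frac{\psi(q)}{q}$. This is exactly \eqref{positive}, and it completes the proof.

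The only point I would handle carefully is the equality between the defining sum and the Fourier series for $\Phi^{\ast}_{q,\theta}$. That equality is Poisson summation, and it is legitimate because $f$ is compactly supported with rapidly decaying $\widehat f$. Since I argue directly from the periodization, the argument does not depend on it. I expect no real obstacle here.
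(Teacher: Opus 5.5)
Your argument is correct and matches the paper's, which simply says the lemma follows directly from \eqref{positive}. You make explicit what the paper leaves implicit: $g_M$ is a finite average of the nonnegative functions $\Phi^{\ast}_{q,\theta}$, so $g_M(x)>0$ forces $\Phi^{\ast}_{q,\theta}(x)>0$ for a single $q\in Q(M)$, and you then reprove \eqref{positive} from $\operatorname{supp} f\subseteq(-1,1)$.
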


\begin{proof} This follows directly from \eqref{positive}.
%The \(1\)-periodicity of $g_M(x)$ implies the assertion stated in the lemma.
\end{proof}

\begin{lem}\label{gM_Fourier}
For $\varepsilon>0$, there exists $M_0^{\ast}>0$ such that for all $M\in\mathcal{M}$ with $M\ge M_0^{\ast}$, the Fourier coefficients of $g_M$ satisfy the following estimates:
\begin{enumerate}
\item[(1)] $\widehat{g_M}(0)=1$.
\item[(2)] For $l\in \mathbb{Z}\setminus \{0\}$,
\[
 \widehat{g_M}(l)  \ll
\begin{cases}
\dfrac{(\log M)^{5}\,\tau(l)}{M} & \text{if } 1\le |l|\le 3 M^{1/\eta}, \\[10pt]
|l|^{-(\eta-\varepsilon)} & \text{if } |l| > M^{1/\eta}.
\end{cases}
\]
%And thus $\widehat{g_M}(l)  \ll M^{-1+}$

\end{enumerate}
\end{lem}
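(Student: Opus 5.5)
The plan is to work directly from the Fourier expansion
\[
\widehat{g_M}(l)=\frac{1}{\#Q(M)}\sum_{q\in Q(M)}\widehat{f}\!\Big(\frac{\psi(q)}{q}l\Big)\, e\!\Big(\frac{l\theta(q)}{q}\Big)\cdot\frac{1}{\#\mathcal{I}_q}\sum_{p\in\mathcal{I}_q}e\!\Big(\frac{lp}{q}\Big).
\]
Part (1) is immediate. At $l=0$ each inner average equals $1$ and $\widehat f(0)=\int f=1$, so $\widehat{g_M}(0)=1$.

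For the large frequencies $|l|>M^{1/\eta}$, I would use only $|\widehat f(\xi)|\ll(1+|\xi|)^{-K}$ and bound each exponential average trivially by $1$. For $q\in Q(M)$ we have $\psi(q)/q\asymp M^{-1/\eta}$; more precisely $\psi(q)/q\ge (2M)^{-1/\eta}$. Hence $|\widehat f(\psi(q)l/q)|\ll (|l|M^{-1/\eta})^{-K}$ for every $q\in Q(M)$. Since $|l|>M^{1/\eta}$ and $K>\eta$, this is at most $(|l|M^{-1/\eta})^{-\eta}=M|l|^{-\eta}$. That is not good enough by itself, so I would combine it with the small-frequency bound instead: take the minimum of the trivial bound $(|l|M^{-1/\eta})^{-K}$ and the arithmetic bound derived below, $(\log M)^5\tau(l)/M$. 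The product of $\min(a,b)\le a^{\alpha}b^{1-\alpha}$ with $\alpha=\eta/K$ gives $\ll |l|^{-\eta}(\log M)^5\tau(l)$. Finally $\tau(l)\ll|l|^{\varepsilon/2}$ and $(\log M)^5\le(\log|l|)^5\ll |l|^{\varepsilon/2}$, which yields $|l|^{-(\eta-\varepsilon)}$. So the large-frequency case reduces to an arithmetic bound that is valid for all $l$; it is not only needed in the window $|l|\le 3M^{1/\eta}$.

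For the small frequencies I would apply Lemma~\ref{grs} to the inner sum, giving $|\sum_{p\in\mathcal{I}_q}e(lp/q)|\ll\gcd(q,l)\log q$. By \eqref{cardIq} and Mertens, $\#\mathcal{I}_q\ge\phi(q)\gg q/\log\log q$. Since $q\le M^{1/\eta}$ for $q\in Q(M)$ (as $\psi<1/2$), we have $\log q\ll\log M$. Together these give
\[
|\widehat{g_M}(l)|\ll\frac{(\log M)^{2}}{\#Q(M)}\sum_{q\in Q(M)}\frac{\gcd(q,l)}{q}.
\]
The sum is bounded by $\sum_{d\mid l}\,d\sum_{q\le M^{1/\eta},\,d\mid q}\frac1q\ll\tau(l)\log M$. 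This is independent of the size constraint on $|l|$, since it uses only the divisor structure. Combining with \eqref{Q(M)}, $\#Q(M)\gg M/(\log M)^2$, the total is $\ll(\log M)^5\tau(l)/M$. The bookkeeping of the $\log$ powers is $2$ (from $\log q$ and $q/\phi(q)$) plus $1$ (from the harmonic sum) plus $2$ (from $\#Q(M)$), which gives exactly $5$. I also need $\log\log q\le\log M$, and that holds.

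The main obstacle I anticipate is the phase $e(l\theta(q)/q)$ together with the arbitrary $A_q,B_q$. Lemma~\ref{grs} handles these uniformly, because its constant is independent of $A,B$, so discarding the phases by the triangle inequality is harmless. The remaining delicate point is making the large-$|l|$ interpolation clean. There I must take $M\ge M_0^*$ large enough that the lower bound $\psi(q)/q\ge(2M)^{-1/\eta}$ and the $\log$ absorptions hold with constants depending only on $\varepsilon,\eta,K$.
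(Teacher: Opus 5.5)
\textbf{Verdict: there is a genuine gap, in the large-frequency case.} Part (1) and your small-frequency bound are fine and are essentially the paper's argument. You use $\#\mathcal{I}_q\ge\phi(q)\gg q/\log\log q$ where the paper uses Mertens' product, which changes nothing. You are also right that the arithmetic bound $(\log M)^5\tau(l)/M$ holds for every $l\neq 0$. (Minor: for $q\in Q(M)$ one only has $q<\tfrac12(2M)^{1/\eta}$, not $q\le M^{1/\eta}$, but you only use $\log q\ll\log M$.)

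\textbf{The interpolation step drops a factor.} Take $a=(|l|M^{-1/\eta})^{-K}$, $b=(\log M)^5\tau(l)/M$ and $\alpha=\eta/K$. Then $a^{\alpha}=M|l|^{-\eta}$, but $b^{1-\alpha}=((\log M)^5\tau(l))^{1-\alpha}M^{-1+\eta/K}$. So $a^{\alpha}b^{1-\alpha}=|l|^{-\eta}M^{\eta/K}((\log M)^5\tau(l))^{1-\eta/K}$, and you lost the factor $M^{\eta/K}$.

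\textbf{No other choice of $\alpha$ repairs this.} The minimum of the two global bounds is itself too weak. At $|l|=M^{1/\eta+1/K}$, both $a$ and $b$ are of size $M^{-1}$ up to logarithms. The target there is $|l|^{-(\eta-\varepsilon)}=M^{-1-\eta/K+\varepsilon(1/\eta+1/K)}$, which is smaller by a fixed power of $M$ once $\varepsilon<\eta^2/(K+\eta)$. Since $K$ is fixed before $\varepsilon$ and $\varepsilon$ is arbitrary, your route only gives decay exponent about $\eta-\eta^2/(K+\eta)$.

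\textbf{The fix, which is what the paper does, is to combine the two pieces multiplicatively rather than by a minimum.} Keep $\widehat{f}(\psi(q)l/q)$ inside the $q$-sum and bound it uniformly over $q\in Q(M)$. Since $K\ge\eta$ and $(\psi(q)/q)^{-\eta}\le 2M$,
\[
\Bigl|\widehat{f}\bigl(\tfrac{\psi(q)}{q}l\bigr)\Bigr|\ll \Bigl(1+\tfrac{\psi(q)}{q}|l|\Bigr)^{-\eta}\le 2M|l|^{-\eta}.
\]
Pull this factor out of the sum. Then apply your arithmetic argument to what remains: Lemma~\ref{grs}, the lower bound on $\#\mathcal{I}_q$, the divisor-sum estimate, and $\#Q(M)\gg M/(\log M)^2$. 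This gives
\[
|\widehat{g_M}(l)|\ll M|l|^{-\eta}\cdot\frac{(\log M)^5\tau(l)}{M}=(\log M)^5\tau(l)|l|^{-\eta}.
\]
Finally, $|l|>M^{1/\eta}$ lets you absorb $(\log M)^5\tau(l)$ into $|l|^{\varepsilon}$, as you intended.

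The point is that the $1/M$ saving from the exponential sums and the factor $M|l|^{-\eta}$ from the decay of $\widehat{f}$ occur in the same term and multiply. A minimum of two separately derived bounds on $|\widehat{g_M}(l)|$ cannot capture that.
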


\begin{proof} A straightforward computation of the Fourier coefficient yields
\begin{align*}
\widehat{g_M}(l)
&= \int_{0}^{1} \frac{1}{\#Q(M)} \sum_{q\in Q(M)} \sum_{k\in\mathbb{Z}} \widehat{\Phi_{q,\theta}}(k) e(kx) \, e(-lx) \, \mathrm{d}x \\
&= \frac{1}{\#Q(M)} \sum_{q\in Q(M)} \widehat{\Phi_{q,\theta}}(l) \\
&= \frac{1}{\#Q(M)} \sum_{q\in Q(M)} \frac{1}{\#\mathcal{I}_q} \sum_{p\in\mathcal{I}_q} e\!\left(l\cdot\frac{p+\theta(q)}{q}\right) \widehat{f}\!\left(\frac{\psi(q)}{q}l\right).
\end{align*}

\noindent(1) When \(l = 0\), since \(\widehat{f}(0) = 1\), we have \(\widehat{g_M}(0) = 1\).

\smallskip

\noindent(2) When \(l \neq 0\), We treat the two cases in the lemma separately.

\medskip

\noindent{}~~\underline{\textbf{Case 1: $1\le |l|\le 3M^{1/\eta}$.}} We bound the Fourier coefficients in the following way.
\begin{align*}
\bigl|\widehat{g_M}(l)\bigr|
&\le \frac{(\log M)^2}{M} \sum_{q\in Q(M)} \frac{1}{q\prod_{p\le q}(1-p^{-1})} \cdot \Bigl|\sum_{p\in\mathcal{I}_q} e\!\bigl(l\cdot \tfrac{p}{q}\bigr)\Bigr| \\
&\ll \frac{(\log M)^2}{M} \sum_{q\in Q(M)} \frac{(\log q)^2 \gcd(q,l)}{q} \\
&\ll \frac{(\log M)^4}{M} \sum_{q=1}^{(2M)\!^{\frac{1}{\eta}}} \frac{\gcd(q,l)}{q} \\
&\le \frac{(\log M)^4}{M} \sum_{d\mid l} \sum_{\substack{1\le k\le (2M)\!^{\frac{1}{\eta}}/d \\ (k,l/d)=1}} \frac{d}{dk} \\
&\ll \frac{(\log M)^5}{M} \, \tau(l),
\end{align*}
where the first inequality is a consequence of the bound \(|\widehat{f}(\xi)| \le 1\) (\(\xi \in \mathbb{R}\)) and the counting estimates \eqref{cardIq} and \eqref{Q(M)};
the second inequality follows from Lemma \ref{grs} and Mertens' third theorem\footnote{Mertens' third theorem states that $\lim_{q\to\infty} \log q \prod_{p\le q}(1-p^{-1}) = e^{-\gamma}$, where $\gamma$ is the Euler-Mascheroni constant.}; the third inequality holds because
\[\frac{1}{2M}\le \left(\frac{\psi(q)}{q}\right)^{\eta}< \left(\frac{1}{q}\right)^{\eta} \qquad \implies \qquad \log q \ll_{\eta} \log M.\]

\medskip

\noindent{}~~\underline{\textbf{Case 2: $|l| > M^{1/\eta}$.}}
Since $|\widehat{f}(\xi)| \ll (1+|\xi|)^{-K}$ with $K>\eta$, we have
\[
\bigl|\widehat{g_M}(l)\bigr|
\le \frac{(\log M)^2}{M} \sum_{q\in Q(M)} \frac{1}{q\prod_{p\le q}(1-p^{-1})} \cdot \Bigl|\sum_{p\in\mathcal{I}_q} e\!\bigl(l\cdot \tfrac{p}{q}\bigr)\Bigr| \cdot \frac{1}{\bigl(1+\frac{\psi(q)}{q}|l|\bigr)^{\eta}}.
\]
Since $({\psi(q)}/{q})^{-\eta}\asymp M$, we deduce that  $(1+ |l|{\psi(q)}/{q})^{-\eta}\ll M|l|^{-\eta}$. Proceeding as in Case~1 and noting that \(|l| > M^{1/\eta}\), we have for sufficiently large \(M\),
\begin{align*}
\bigl|\widehat{g_M}(l)\bigr|
\ll \frac{(\log M)^5}{M} \, \tau(l) \cdot M \, |l|^{-\eta}
\le |l|^{-(\eta-\varepsilon)}.
\end{align*}
This completes the proof.

\end{proof}

\begin{lem}\label{stability}
For every $h \in C_c^{K}(\mathbb{R})$ and $\delta > 0$, there exists   $M_0 = M_0(h, \delta)\in\mathbb{N}$ such that for all
$M \ge \max\{M_0,M_{0}^{\ast}\}$ (where $M_{0}^{\ast}$ is as in Lemma \ref{gM_Fourier}) and   $\xi \in \mathbb{R}$, we have the following estimate for the Fourier transforms:
\[
\bigl| \widehat{g_M h}(\xi) - \widehat{h}(\xi) \bigr| \ll \delta \, \bigl( 1 + |\xi| \bigr)^{-(\eta -\varepsilon)}.
\]
\end{lem}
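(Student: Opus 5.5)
Since $g_M$ is $1$-periodic with $\widehat{g_M}(0)=1$, I will expand it in its Fourier series $g_M(x)=\sum_{l\in\mathbb{Z}}\widehat{g_M}(l)e(lx)$. Multiplying by $h$ and taking Fourier transforms gives the convolution identity
\[
\widehat{g_M h}(\xi)-\widehat{h}(\xi)=\sum_{l\in\mathbb{Z}\setminus\{0\}}\widehat{g_M}(l)\,\widehat{h}(\xi-l).
\]
Because $h\in C_c^K(\mathbb{R})$, we have $|\widehat{h}(\xi)|\ll_h(1+|\xi|)^{-K}$ with $K>\eta+4$. The task is therefore to bound
\[
S(\xi):=\sum_{l\neq0}|\widehat{g_M}(l)|(1+|\xi-l|)^{-K}
\]
by $\delta(1+|\xi|)^{-(\eta-\varepsilon)}$ once $M$ is large. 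I will split the range of $l$ into three pieces, using the two regimes of Lemma~\ref{gM_Fourier} together with the trivial bound $|\widehat{g_M}(l)|\le 1$.

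\textbf{Range $1\le|l|\le 3M^{1/\eta}$.} Here Lemma~\ref{gM_Fourier} gives $|\widehat{g_M}(l)|\ll (\log M)^5\tau(l)/M\ll M^{-1+\varepsilon'}$, using $\tau(l)\ll l^{\varepsilon'}\ll M^{\varepsilon'/\eta}$.
\begin{itemize}
\item If $|\xi|\le 4M^{1/\eta}$: the factor $(1+|\xi-l|)^{-K}$ is summable in $l$, so this part is $\ll M^{-1+\varepsilon'}$. Since $(1+|\xi|)^{\eta-\varepsilon}\ll M^{(\eta-\varepsilon)/\eta}=M^{1-\varepsilon/\eta}$, the contribution times $(1+|\xi|)^{\eta-\varepsilon}$ is $\ll M^{-\varepsilon/\eta+\varepsilon'}$. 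This is below $\delta$ for large $M$ once $\varepsilon'<\varepsilon/\eta$.
\item If $|\xi|>4M^{1/\eta}$: then $|\xi-l|\ge|\xi|/4$, so this part is $\ll M^{1/\eta}(1+|\xi|)^{-K}M^{-1+\varepsilon'}\ll |\xi|^{-K+1}M^{-1+\varepsilon'}$. This is far smaller than $\delta|\xi|^{-(\eta-\varepsilon)}$.
\end{itemize}

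\textbf{Range $|l|>3M^{1/\eta}$.} Here $|\widehat{g_M}(l)|\ll|l|^{-(\eta-\varepsilon)}$.
\begin{itemize}
\item Terms with $|l-\xi|\le|\xi|/2$: then $|l|\asymp|\xi|$ and $|\xi|\gg M^{1/\eta}$. They contribute $\ll|\xi|^{-(\eta-\varepsilon)}$, and the saving $\delta$ must come from elsewhere. I would run Lemma~\ref{gM_Fourier} with a slightly smaller exponent: apply it with $\varepsilon/2$ in place of $\varepsilon$, giving $|l|^{-(\eta-\varepsilon/2)}\le M^{-\varepsilon/(2\eta)}|l|^{-(\eta-\varepsilon)}$, which supplies the factor $\delta$ for large $M$.
\item Terms with $|l-\xi|>|\xi|/2$: the tail gives $\sum_{|l|>3M^{1/\eta}}|l|^{-(\eta-\varepsilon/2)}(1+|\xi|/2+|l-\xi|/2)^{-K}$. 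Splitting the decay as $(1+|\xi|)^{-(\eta-\varepsilon)}(1+|l-\xi|)^{-(K-\eta)}$, this is $\ll (1+|\xi|)^{-(\eta-\varepsilon)}M^{-(\eta-\varepsilon/2)/\eta}$, again small.
\end{itemize}

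\textbf{Main obstacle.} The delicate point is the intermediate regime $|\xi|\approx M^{1/\eta}$, where the two bounds of Lemma~\ref{gM_Fourier} meet. There the gain relies on $(\log M)^5\tau(l)/M$ beating $|\xi|^{-(\eta-\varepsilon)}\approx M^{-1+\varepsilon/\eta}$ by a power of $M$. I expect the $\varepsilon$-loss bookkeeping to close this, by taking the divisor-bound exponent strictly below $\varepsilon/\eta$ and invoking Lemma~\ref{gM_Fourier} with $\varepsilon/2$. The implied constants depend on $h$ only through $\sup_\xi(1+|\xi|)^K|\widehat{h}(\xi)|$. Choosing $M_0(h,\delta)$ so that all the resulting powers of $M$ lie below $\delta$ divided by this constant completes the argument.
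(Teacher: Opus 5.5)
Your proof is correct and follows essentially the same route as the paper: the convolution identity $\widehat{g_Mh}(\xi)-\widehat{h}(\xi)=\sum_{l\neq0}\widehat{g_M}(l)\widehat{h}(\xi-l)$, the $(1+|\xi|)^{-K}$ decay of $\widehat{h}$, and the two regimes of Lemma~\ref{gM_Fourier}, split at $|\xi|\approx M^{1/\eta}$ and according to whether $|l-\xi|$ is smaller or larger than $|\xi|/2$. The only organisational difference is that you split by $|l|$ first and the paper by $|l-\xi|$ first, which is immaterial. Your use of Lemma~\ref{gM_Fourier} with $\varepsilon/2$ to gain the factor $M^{-\varepsilon/(2\eta)}$ in the near-diagonal regime $|l|\asymp|\xi|>2M^{1/\eta}$ supplies the $\delta$-saving that the paper's Case~2 of $S_2$ asserts without justification; there, the bound $|l|^{-(\eta-\varepsilon)}$ alone only gives $\ll(1+|\xi|)^{-(\eta-\varepsilon)}$ with no small factor.
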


\begin{proof}
 By  Lemma \ref{gM_Fourier}, we obtain
\begin{align*}
\bigl| \widehat{g_M h}(\xi) - \widehat{h}(\xi) \bigr|
 \le& \sum_{l\in\mathbb{Z}\setminus\{0\}} \bigl| \widehat{g_M}(l)  \bigr| \bigr|\widehat{h}(\xi-l)\bigr|\\
\le&  \sum_{\substack{l\in\mathbb{Z}\setminus\{0\}\\ |\xi-l|\ge \frac{1}{2}|\xi|}} \bigl|\widehat{g_M}(l)  \bigr|\cdot \frac{1}{(1+|\xi-l|)^{K}}+\sum_{\substack{l\in\mathbb{Z}\setminus\{0\}\\ |\xi-l|< \frac{1}{2}|\xi|}} \bigl|\widehat{g_M}(l)  \bigr|\cdot \frac{1}{(1+|\xi-l|)^{K}}\\
=:& ~S_1+S_2.
 \end{align*}
We estimate \(S_1\) and \(S_2\) separately.

\medskip

\noindent\textbf{Estimate of $S_1$.} By Lemma \ref{gM_Fourier}(2), we see that \(|\widehat{g_M}(l)| \ll \delta\) holds uniformly for \(l \in \mathbb{Z} \setminus \{0\}\) when \(M\) is large. Consequently,
\begin{align*}
S_1 &\ll \delta \sum_{\substack{l \in \mathbb{Z}\setminus\{0\} \\ |\xi - l| \ge \frac{1}{2}|\xi|}} \frac{1}{(1 + |\xi - l|)^{\eta}} \cdot \frac{1}{(1 + |\xi - l|)^{K - \eta}} \\
&\ll \frac{\delta}{(1 + |\xi|)^{\eta}} \sum_{\substack{l \in \mathbb{Z}\setminus\{0\} \\ |\xi - l| \ge \frac{1}{2}|\xi|}} \frac{1}{(1 + |\xi - l|)^{K - \eta}}
\ll \frac{\delta}{(1 + |\xi|)^{\eta}},
\end{align*}
where the last inequality follows from \(K - \eta > 4\) and hence the convergence of the series.
\medskip

\noindent\textbf{Estimate of $S_2$.} Note that in this regime we have $|l| \asymp |\xi|$. We distinguish two cases.

\medskip

\noindent\textbf{Case 1: $|\xi| \le 2M^{1/\eta}$.}
Applying Lemma \ref{gM_Fourier}(2) for $1 \le |l| \le 3M^{1/\eta}$, we obtain
\begin{align*}
S_2 &\ll \frac{(\log M)^{5} \tau(l)}{M} \sum_{\substack{l \in \mathbb{Z}\setminus\{0\} \\ |\xi - l| < \frac{1}{2}|\xi|}} \frac{1}{(1 + |\xi - l|)^{K - \eta}} \\
&\ll M^{-1+\varepsilon/(2\eta)} = M^{-\varepsilon/(2\eta)} M^{-1+\varepsilon/\eta} \ll \delta \, (1 + |\xi|)^{-(\eta - \varepsilon)},
\end{align*}
provided $M$ is large.

\medskip

\noindent\textbf{Case 2: $|\xi| > 2M^{1/\eta}$.}
In this case $|l| > M^{1/\eta}$, and Lemma \ref{gM_Fourier}(2) yields
\[
S_2 \ll |l|^{-(\eta - \varepsilon)} \sum_{\substack{l \in \mathbb{Z}\setminus\{0\} \\ |\xi - l| < \frac{1}{2}|\xi|}} \frac{1}{(1 + |\xi - l|)^{K - \eta}}
\le \delta \, (1 + |\xi|)^{-(\eta - \varepsilon)}
\]
for all large $M$.

Thus the proof of the lemma is complete.
\end{proof}

\noindent\textbf{Construction of the measure $\nu$.}
To prove Proposition \ref{p2}, we follow a construction analogous to that in \cite{B98} to obtain a measure \(\nu\) with Fourier decay
\[
\widehat{\nu}(\xi) = O\bigl(|\xi|^{-(\eta - \varepsilon)}\bigr).
\]
Roughly speaking, we construct a sequence of measures approximating \(\nu\) by repeatedly multiplying the density functions by \(g_{M_k}\), where the sequence \((M_k)_{k \in \mathbb{N}}\) is chosen recursively according to the stability lemma above.

\medskip
Fix a function \(h_0 \in C_c^{K}(\mathbb{R})\) such that \(\int h_0(x) \, \mathrm{d}x = 1\), \(\operatorname{supp}(h_0) = [0,1]\), and \(h_0(x) > 0\) for \(x \in [0,1]\).
Applying Lemma \ref{stability} iteratively, we choose
\begin{align*}
M_1 &= M_1(h_0, 2^{-1}),\\
M_2 &= M_2(h_0 g_{M_1}, 2^{-2}),\\
\vdots\\
M_k &= M_k(h_0 g_{M_1} \cdots g_{M_{k-1}}, 2^{-k}) \qquad (k \in \mathbb{N}),
\end{align*}
with \(\{M_k : k \ge 1\} \subset \mathcal{M}\) and \(2M_k < M_{k+1}\) for \(k \ge 1\). We then define the measures
\[
\mathrm{d}\nu_0 = h_0 \, \mathrm{d}x, \qquad
\mathrm{d}\nu_k = h_0 \, g_{M_1} \cdots g_{M_k} \, \mathrm{d}x \quad (k \in \mathbb{N}),
\]
which satisfy
\[
\bigl| \widehat{\nu}_k(\xi) - \widehat{\nu}_{k-1}(\xi) \bigr| \ll 2^{-k} \, |\xi|^{-(\eta - \varepsilon)}.
\]
This estimate implies that the sequence \(\{\widehat{\nu}_k\}_{k=0}^{\infty}\) is a  Cauchy sequence  in the supremum norm.
L\'evy's continuity theorem \cite[Theorem 3.3.17]{D19} then guarantees that \(\{\widehat{\nu}_k\}_{k=0}^{\infty}\) converges weakly to a finite non-zero Borel measure \(\nu\) satisfying
\[
\widehat{\nu}(\xi) = O\bigl(|\xi|^{-(\eta - \varepsilon)}\bigr).
\]

Furthermore, by Lemma \ref{subset}, we have
\[
\operatorname{supp}(\nu) \subseteq \bigcap_{k=1}^{\infty} \operatorname{supp}(g_{M_k}) \subseteq W^{\ast}(\psi,\theta).
\]

Normalizing \(\nu\) gives a probability measure. Then, by the definition of the Fourier dimension and the arbitrariness of \(\varepsilon\), the theorem follows.

\subsection*{Acknowledgements}
This work was supported by National Key R$\&$D Program of China (No. 2024YFA1013700) and NSFC (No. 12331005).

\subsection*{Declarations}

{\bf Conflict of interest} On behalf of all authors, the corresponding author states that there is no conflict of interest.

%\author{Bo Tan}
%{\footnotesize

%School  of  Mathematics  and  Statistics

%Huazhong  University  of Science  and  Technology, 430074 Wuhan, PR China

%Email: \texttt{tanbo@hust.edu.cn}}
%\vspace{5mm}

%\author{Qing-Long Zhou}
%{\footnotesize

%School  of  Mathematics  and  Statistics

% Wuhan University of Technology, 430070 Wuhan, PR China

%Email: \texttt{zhouql@whut.edu.cn}}

\end{document}